\newtheorem{Cor}{Corollary}[section]
\newtheorem{prop}{Proposition}[section]
\newtheorem{rem}{Remark}[section]
\newtheorem{algorithm}{Algorithm}
\title{A New Method for Computing $\varphi$-functions and Their Condition Numbers of Large Sparse Matrices}
\author{Gang Wu\thanks{Corresponding author (G. Wu). Department of Mathematics,
China University of Mining and Technology, Xuzhou, 221116, P.R. China.
E-mail: {\tt gangwu76@126.com} and {\tt gangwu@cumt.edu.cn}. This author is
supported by the National Science Foundation of China under grant 11371176, the Natural Science Foundation of Jiangsu
Province under grant BK20131126, and the Talent Introduction Program of China
University of Mining and Technology.}
        \and Lu Zhang\thanks{School of Mathematics and Physical Sciences Technology, Xuzhou Institute of Technology, Xuzhou, 221111, Jiangsu, P.R. China.
E-mail: {\tt yulu7517@126.com}.}
}
\begin{document}

\maketitle

\begin{abstract}
We propose a new method for computing the $\varphi$-functions
of large sparse matrices with low rank or fast decaying singular values.
The key is to reduce the computation of $\varphi_{\ell}$-functions of a large matrix
to $\varphi_{\ell+1}$-functions of some $r$-by-$r$ matrices, where $r$ is the numerical rank of the large matrix in question.
Some error analysis on the new method is given.
Furthermore, we propose two novel strategies for estimating 2-norm condition numbers of the $\varphi$-functions.
Numerical experiments illustrate the numerical behavior of the new algorithms and show the effectiveness of our theoretical results.
\end{abstract}

\begin{keywords}
 Matrix function; $\varphi$-functions; Fr\'{e}chet derivative; Low-rank matrix; Fast decaying singular values; Sparse column-row approximation (SCR).
\end{keywords}

\begin{AMS}
65F60, 65F35, 65F15.
\end{AMS}

\pagestyle{myheadings} \thispagestyle{plain} \markboth{G. WU AND L. ZHANG}{\sc New Method for Computing $\varphi$-Functions and Their Condition Numbers}

\section{Introduction}

\setcounter{equation}{0}

In recent years, a great deal of attention has been focused on
the efficient and accurate
evaluation of matrix functions closely related to the $\varphi$-functions
\cite{Ber2,Higham,MCH,Hochbruck-Ostermann-2010,YY,ML,NW2,ST,WMWright,Wu1}.
For instance, exponential integrators make use of the matrix exponential and related matrix functions
within the formulations of the numerical methods, and
the evaluation of matrix functions is crucial for accuracy, stability, and efficiency
of exponential integrators \cite{Hochbruck-Ostermann-2010}.
The $\varphi$-functions
are defined for scalar arguments by the integral representation as follows
\begin{equation}\label{equ1}
\varphi_{0}(z)={\rm exp}(z)\quad{\rm and}\quad\varphi_{\ell}(z)=\frac{1}{(\ell-1)!}{\int_0^1 {\rm exp}\big((1-\theta)z\big)\theta^{\ell-1}d\theta},\quad \ell=1,2,\ldots
\end{equation}
Moreover, the $\varphi$-functions satisfy the following recurrence relations
\begin{equation}\label{eqn11}
\varphi_{\ell}(z)=z\varphi_{\ell+1}(z)+\frac{1}{\ell!},\quad \ell=0,1,2,\ldots
\end{equation}
This definition can be extended to matrices instead of scalars by using any of the
available definitions of matrix functions \cite{Higham,Hochbruck-Ostermann-2010}.

In a wide range of applications, such as the matrix exponential discriminant analysis method for data dimensionality reduction \cite{Ah,DB,WCP,Wang,YPan,ZF}, and the complex network analysis method based on matrix function \cite{Benzi,EH,EDH,EJA}, it is required to compute the matrix exponential with respect to large scale and low-rank matrix.
In this paper, we are interested in computing several $\varphi$-functions {\it consecutively},
with respect to a large scale matrix $A$ with low rank or with fast decaying singular values. Let $\sigma_j$ be the $j$-th largest singular value of $A$, by ``fast decaying singular values", we mean $\sigma_j=\mathcal{O}(\rho^{-j}),\rho>1$ or $\sigma_j=\mathcal{O}(j^{-\alpha}),\alpha>1$ \cite{Hof}.
Such matrices appear frequently in diverse application areas
such as data dimensionality reduction \cite{DHS}, complex network analysis \cite{EDH}, discretizing ill-posed operator equations that model many inverse problems \cite{Han},
randomized algorithms for matrix approximations \cite{Random,Mon}, finite elements discretization \cite{Testmatrix}, and so on.

In spite of the high demand for efficient methods to solve the matrix $\varphi$-functions in various fields of computational sciences,
there is no easy way to solving this type of problem.
Indeed, when $A$ is large, both the computational cost and the storage requirements are prohibitive, moreover, $\varphi_{\ell}(A)$ can be dense even if $A$ is sparse \cite{Higham}.
Some available methods are only suitable for medium-sized matrices.
For instance, a MATLAB toolbox called EXPINT is provided by Berland, Skaflestad and Wright \cite{Ber2} for this problem. Kassam and Trefethen \cite{Tre} propose to approximate $\varphi$-functions with a contour integral,
which worked well as long as the contour of integration is suitably chosen. However, the
contour is in general problem-dependent and difficult to determine in advance. Another way is to reduce the computation of $\varphi_{\ell}(A)$ to that of matrix exponential with larger size \cite{HJH,YY,Sidje}, which is unfeasible as the matrix in question is large. The third way is based on a modification of the scaling
and squaring technique \cite{WMWright}, the most commonly used approach for computing the matrix exponential \cite{NJ}.
The
most well-known methods for computing $\varphi$-functions for large spares matrices
are the Krylov subspace methods \cite{NW2,Wu1}. However, the Krylov subspace methods are applicable to the computation of $\varphi$-functions on given (block) vectors, while the main aim of this paper is to compute $\varphi$-functions of large sparse matrices.

In practical calculations, it is important to know how accurate the computed solution is and how small perturbations in input data can effect outputs \cite{NJH}.
Therefore, it is crucial to give error analysis and understand the sensitivity of matrix function to perturbation in the data. Sensitivity is measured by condition numbers. For matrix function, condition number can be expressed in terms of the norm of the Fr\'{e}chet derivative,
and it is often measured by using the 1-norm \cite{Higham,Toolbox}. In practice, however, the 2-norm is a more widely used norm than the 1-norm, and the former is preferable for both theoretical analysis and computational purposes.
In this work, we consider how to evaluate the Fr\'{e}chet 2-norm condition numbers of $\varphi$-functions effectively.

Given a large scale matrix $A$ with low rank or with fast decaying singular values, we propose a new method for evaluating several $\varphi$-functions and their absolute and relative condition numbers {\it consecutively}.
Our new method is based on the sparse column-row approximation of large sparse matrices \cite{Stewart,GWStewart,ErrStewart}. An advantage is that there is no need to explicitly form and store the $\varphi$-functions or the Fr\'{e}chet derivatives with respect to $A$.
The overhead is only to compute $\varphi$-functions of some $r$-by-$r$ matrices, and to store two $n$-by-$r$ sparse matrices, where $r$ is the (numerical) rank of $A$.
This paper is organized as follows. In Section 2, we present the main algorithm, and give some error analysis on the proposed method.
In Section 3, we propose two novel strategies for estimating the absolute and relative 2-norm condition numbers of $\varphi$-functions. In Section 4, numerical experiments are given to illustrate the efficiency of our new strategies. Some concluding remarks are given in Section 5.

Some notations used are listed as follows. Throughout this paper, we denote by $\widetilde{A}=XTY^T$ a {sparse column-row approximation} to $A$.
Let $\|\cdot\|_2,\|\cdot\|_1$ be the 2-norm and the 1-norm of a vector or matrix, and $\|\cdot\|_F$ be the Frobenius norm of a matrix.
We denote by $\otimes$ the Kronecker product, and by ${\rm vec}(\cdot)$  the ``vec operator" that stacks the columns of a matrix into a long vector. Let $I$ and $O$ be the identity matrix and zero matrix, respectively, whose order is clear from context.
We only focus on real matrices in this paper. Indeed, all the results can be extended to complex matrices in a similar way.


\section{A new method for $\varphi$-functions of large sparse matrices}
\setcounter{equation}{0}

In this section, we will present a new method for $\varphi$-functions of large sparse matrices with low rank or with fast decaying singular values, and give some error analysis on the proposed method.
Given an $n\times n$ large sparse matrix $A$, we first find a reduced-rank approximation
$XTY^T$ to $A$, where both $X$ and $Y$ are full column rank and $T$ is nonsingular. This type of problem arises in a number of applications such as information retrieval, computational biology and complex network analysis \cite{Ber1,Ber22,Ber3,Stewart,Jiang,Stuart,ZY}.
A widely used reduced-rank approximation is the truncated singular value
decomposition (TSVD) \cite{GV}, which is known to be optimal in the sense that the Frobenius
norm $\|A-XTY^T\|_F$ is minimized. Unfortunately, this method computes the full decomposition and is
not suitable for very large matrices.
An alternative is the randomized singular value
decomposition algorithm \cite{Random,Mon}, which generally gives
results comparable to TSVD. However, for a large and sparse matrix $A$, the situation is not so simple: the storage requirements
and operation counts will become proportional to the number of nonzero
elements in $A$. Since the resulting factors $X, T$, and $Y$ are generally not sparse, one may suffer from heavily
computational cost.

In \cite{GWStewart}, Stewart introduced a quasi-Gram-Schmidt
algorithm that produces a sparse QR factorization to $A$. Based on the quasi-Gram-Schmidt
algorithm, a sparse column-row approximation algorithm was proposed.
This algorithm first applies the quasi-Gram-Schmidt algorithm to the columns of
$A$ to get a representative set of columns $X$ of $A$ and an upper triangular matrix
$R$. Let the error in the corresponding reduced-rank decomposition
be $\epsilon_{\rm col}$. It then applies the same algorithm to $A^T$ to get a representative
set $Y^T$ of rows and another upper triangular matrix $S$. Let the error be $\epsilon_{\rm row}$.
Then the sparse column-row approximation method seeks a matrix $T$ such that $\|A-XTY^T\|_F^2=\min$,
and the minimizer turns out to be \cite{Stewart,GWStewart}
$$
T=R^{-1}R^{-T}(X^TAY)S^{-1}S^{-T},
$$
moreover, we have \cite{GWStewart}
\begin{equation}\label{eqn2.2}
\|A-XTY^T\|_F^2\leq \epsilon_{\rm col}^2+\epsilon_{\rm row}^2.
\end{equation}
The matrix $XTY^T$ is called a {sparse column-row approximation} (SCR) to $A$, where $X,Y\in\mathbb{R}^{n\times r}$ are sparse and full column rank, $T\in\mathbb{R}^{r\times r}$ is nonsingular, and $r$ is the (numerical) rank of $A$.
In this approximation, $X$ consists of a selection of the columns of $A$, and $Y$ consists of a
selection of the rows of $A$, so that when $A$ is sparse so are both $X$ and $Y$. An error analysis of the quasi-Gram-Schmidt algorithm is given in \cite{ErrStewart}.
One is recommended to see \cite{Stewart,GWStewart} for more details on this algorithm.

Given {\it any} rank-revealing decomposition of $A$, the following theorem shows that the computation of $\varphi_{\ell}(A)$ can be reduced to that of $\varphi_{\ell+1}$ function of an $r\times r$ matrix, where $r$ is the (numerical) rank of $A$.

\begin{theorem}\label{Thm4.1}
Let $XTY^T\in\mathbb{R}^{n\times n}$ be a rank-revealing decomposition of $A$, where $X,Y\in\mathbb{R}^{n\times r}$ and $T\in\mathbb{R}^{r\times r}$. Denote $Z=T(Y^TX)\in\mathbb{R}^{r\times r}$, then
\begin{equation}\label{4.1}
\varphi_{\ell}(XTY^T)=\frac{1}{\ell!}I+X\big[\varphi_{\ell+1}(Z)T\big]Y^T,\quad \ell=0,1,\ldots
\end{equation}
\end{theorem}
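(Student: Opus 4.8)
The plan is to exploit the power-series definition of $\varphi_{\ell}$ and the key algebraic feature of the factorization $XTY^T$: powers of $XTY^T$ collapse onto powers of the small matrix $Z = T(Y^TX)$. Concretely, from the Taylor expansion $\varphi_{\ell}(z) = \sum_{k=0}^{\infty} \frac{z^k}{(k+\ell)!}$ (which follows from \eqref{equ1} by expanding $\exp((1-\theta)z)$ and integrating term by term), one gets, for matrix argument,
\begin{equation}\label{eqn:series}
\varphi_{\ell}(XTY^T) = \frac{1}{\ell!} I + \sum_{k=1}^{\infty} \frac{(XTY^T)^k}{(k+\ell)!}.
\end{equation}
The first step is therefore to write down \eqref{eqn:series} and isolate the $k=0$ term, which produces the $\frac{1}{\ell!}I$ summand in \eqref{4.1}.

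The second step is the telescoping identity for the powers: for $k \ge 1$,
$$
(XTY^T)^k = X \bigl(T Y^T X\bigr)^{k-1} T Y^T = X Z^{k-1} T Y^T,
$$
which one proves by a trivial induction on $k$ (the base case $k=1$ is immediate, and $(XTY^T)^{k} = (XTY^T)^{k-1}(XTY^T) = XZ^{k-2}TY^T X T Y^T = XZ^{k-1}TY^T$ using $Z = TY^TX$). Substituting this into \eqref{eqn:series} gives
$$
\varphi_{\ell}(XTY^T) = \frac{1}{\ell!} I + X\Bigl(\sum_{k=1}^{\infty} \frac{Z^{k-1}}{(k+\ell)!}\Bigr) T Y^T.
$$

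The third step is to recognize the bracketed sum. Reindexing with $j = k-1$ gives $\sum_{j=0}^{\infty} \frac{Z^{j}}{(j+\ell+1)!} = \varphi_{\ell+1}(Z)$, by the same Taylor series \eqref{eqn:series} applied with $\ell+1$ in place of $\ell$ and $Z$ in place of the matrix argument. This yields exactly $\varphi_{\ell}(XTY^T) = \frac{1}{\ell!}I + X[\varphi_{\ell+1}(Z)T]Y^T$, as claimed. Alternatively, and perhaps more cleanly, one can avoid series manipulation entirely by starting from the integral representation \eqref{equ1}: write $\varphi_{\ell}(XTY^T) = \frac{1}{(\ell-1)!}\int_0^1 \exp((1-\theta)XTY^T)\theta^{\ell-1}\,d\theta$ for $\ell \ge 1$, apply the rank-one-type reduction $\exp(XTY^T) = I + X\varphi_1(Z)TY^T$ (the $\ell=0$ case, itself proved by the power-series argument above, noting $\varphi_0 = \exp$ needs the recurrence \eqref{eqn11} or a direct series split), substitute $\exp((1-\theta)XTY^T) = I + X\varphi_1((1-\theta)Z)(1-\theta)TY^T$, and interchange integral and the (constant) factors $X,T,Y^T$; the scalar identity $\frac{1}{(\ell-1)!}\int_0^1 \varphi_1((1-\theta)z)(1-\theta)\theta^{\ell-1}\,d\theta = \varphi_{\ell+1}(z)$ then finishes the argument.

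I do not anticipate a genuine obstacle here: the result is essentially a bookkeeping identity, and the only point requiring a little care is the justification of the termwise operations — convergence of the series \eqref{eqn:series} for every matrix argument (entire function, so no issue), and the legitimacy of pulling $X$, $T$, $Y^T$ out of the infinite sum or the integral (immediate since they are fixed finite-dimensional matrices and the partial sums converge in norm). I would state these briefly and move on. The one thing to double-check is the index shift in the reindexing step, since an off-by-one there would change $\varphi_{\ell+1}$ to $\varphi_{\ell}$ or $\varphi_{\ell+2}$; writing out the $\ell=0$ case explicitly as a sanity check (where \eqref{4.1} reads $\exp(XTY^T) = I + X\varphi_1(Z)TY^T$) is a cheap way to confirm the constant and the subscript are right.
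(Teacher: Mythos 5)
Your proposal is correct and follows essentially the same route as the paper: expand $\varphi_{\ell}$ in its power series, split off the constant term $\frac{1}{\ell!}I$, collapse $(XTY^T)^k = XZ^{k-1}TY^T$, and reindex to recognize $\varphi_{\ell+1}(Z)$ — this is exactly the paper's argument up to a shift in summation index. The sanity check at $\ell=0$ and the sketched integral-representation alternative are fine but not needed.
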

\begin{proof}
It follows from the definition of $\varphi$-functions that
\begin{eqnarray*}
\varphi_{\ell}(XTY^T)&=&\sum\limits_{k=\ell}^{\infty}\frac{(XTY^T)^{k-\ell}}{k!}\\
&=&\frac{1}{\ell!}I+\sum\limits_{k=\ell+1}^{\infty}\frac{X(TY^TX)^{k-\ell-1}TY^T}{k!}\\
&=&\frac{1}{\ell!}I+X\bigg(\sum\limits_{k=\ell+1}^{\infty}\frac{(TY^TX)^{k-(\ell+1)}}{k!}\bigg)TY^T\\
&=&\frac{1}{\ell!}I+X\big[\varphi_{\ell+1}(TY^TX)T\big]Y^T.
\end{eqnarray*}
\end{proof}

Let $\widetilde{A}=XTY^T$ be a sparse column-row approximation to $A$, then we make use of $\varphi_{\ell}(\widetilde{A})$ as an approximation to $\varphi_{\ell}(A)$.
The following algorithm can be used to compute several $\varphi$-functions of large sparse matrices with low rank or fast decaying singular values
{\it consecutively}.

\begin{algorithm}\label{Alg.1} {\bf An algorithm for computing $\varphi$-functions of large sparse matrices with low rank or fast decaying singular values}\\
1.~Compute a reduced-rank approximation
$XTY^T$ to $A$ by using, say, the sparse column-row approximation {\rm(}SCR{\rm)} algorithm;\\
2.~Compute $\varphi$-functions of small-sized matrices: $\varphi_{\ell+1}(TY^TX),~\ell=0,1,\ldots,p$;\\
3.~Store $X,Y,T$ and $\varphi_{\ell+1}(TY^TX)$ for $\varphi_{\ell}(\widetilde{A}),~\ell=0,1,\ldots,p$. If desired, form $\varphi_{\ell}(\widetilde{A})$ in terms of \eqref{4.1} and use them as approximations to $\varphi_{\ell}(A)~\ell=0,1,\ldots,p$.
\end{algorithm}
\begin{rem}
Obviously, an advantage of the proposed method is its simplicity.
In conventional methods, one has to pay $\mathcal{O}\big((p+1) n^3\big)$ flops for the computation of $\varphi_{\ell}(A)$ \cite{Ber2,Bey,Higham,WMWright}. Given a sparse reduced-rank approximation to $A$, Theorem \ref{Thm4.1} reduces the computation of $\varphi_{\ell}(A)$ to that of $\varphi_{\ell+1}$ functions with respect to the $r$-by-$r$ matrix $Z=T(Y^TX)$, in $\mathcal{O}\big((p+1) r^3\big)$ flops.
For storage, it only needs to store two $n$-by-$r$ sparse matrices $X,Y$, and some small matrices of size $r$-by-$r$, rather than the $n$-by-$n$ possibly dense matrices $\varphi_{\ell}(A),~\ell=0,1,\ldots,p$.
Thus, the new method can compute $\varphi_{\ell}(A),\ell=0,1,\ldots,p$, consecutively and reduce the computational complexities significantly as $r\ll n$.
\end{rem}

In practice, most data are inexact or uncertain. Indeed, even if the data were exact, the computations will subject to rounding errors. So it is important to give error analysis and understand the sensitivity of matrix function to perturbation in the data. Sensitivity is measured by condition numbers. For matrix function, condition number can be expressed in terms of the norm of the Fr\'{e}chet derivative.
The Fr\'{e}chet derivative $L_f(A,E)$ of a matrix function $f: \mathbb{R}^{n\times n}\rightarrow\mathbb{R}^{n\times n}$ at a point $A\in\mathbb{R}^{n\times n}$ is a linear mapping such that for all $E\in\mathbb{R}^{n\times n}$ \cite[pp.56]{Higham}
\begin{equation}\label{eqn317}
f(A+E)-f(A)-L_f(A,E)=o(\|E\|).
\end{equation}
The absolute and relative condition numbers of a matrix function $f(A)$ are defined as \cite{Higham,Rice}
\begin{equation}\label{Eqn4.9}
{\rm cond}_{\rm abs}(f,A)=\|L_f(A)\|=\max_{E\neq O}\frac{\|L_f(A,E)\|}{\|E\|},
\end{equation}
and
\begin{equation}\label{eqn330}
{\rm cond}_{\rm rel}(f,A)=\frac{\|L_f(A)\|\|A\|}{\|f(A)\|},
\end{equation}
respectively.
Theoretically, the Fr\'{e}chet derivative can be obtained from applying any existing methods for computing the matrix function of a $2n\times 2n$ matrix \cite{Higham}
\begin{equation}\label{eqn38}
f\left(\left[\begin{array}{cc}
A & E \\ O & A
\\\end{array} \right]\right)
=\left[\begin{array}{cc}
f(A) & L_f(A,E) \\ O & f(A)
\\\end{array} \right],
\end{equation}
see, \cite[Algorithm~3.17]{Higham}. However, it requires $\mathcal{O}(n^5)$ flops, assuming that the computation of $L_f(A,E)$ takes $\mathcal{O}(n^3)$ flops \cite{Higham}, which are prohibitively expensive for large matrices.

Let $\widetilde{A}=XTY^T$ be a sparse column-row approximation to $A$, and let $E=A-\widetilde{A}$, then we see from \eqref{eqn2.2} that
$\|E\|_F\leq\sqrt{\epsilon_{\rm col}^2+\epsilon_{\rm row}^2}$. Thus, it is interesting to combine existing error analysis for sparse column-row approximation with the theory of matrix functions to obtain error bounds for the proposed method. We first present the following theorem for Fr\'{e}chet derivatives of $\varphi$-functions.
\begin{theorem} For any matrix $E\in\mathbb{R}^{n\times n}$, we have
\begin{equation}\label{equ-phi}
\varphi_{\ell}(A+E)-\varphi_{\ell}(A)=\int_{0}^{1}\exp\big((1-s)A\big)s^{\ell}E\varphi_{\ell}\big(s(A+E)\big)ds,\quad \ell=0,1,2,\ldots
\end{equation}
\end{theorem}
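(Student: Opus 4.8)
The plan is to read the right-hand side of \eqref{equ-phi} as a Duhamel (variation-of-constants) formula and to derive it from a matrix ODE satisfied by $t\mapsto t^{\ell}\varphi_{\ell}(tM)$. First I would establish the auxiliary identity
\begin{equation*}
\frac{d}{dt}\big[t^{\ell}\varphi_{\ell}(tM)\big]=M\,t^{\ell}\varphi_{\ell}(tM)+\frac{t^{\ell-1}}{(\ell-1)!}I,\qquad \ell=1,2,\ldots,
\end{equation*}
with the convention that for $\ell=0$ the last term is absent and $t^{0}\varphi_{0}(tM)=\exp(tM)$ satisfies $\frac{d}{dt}\exp(tM)=M\exp(tM)$. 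This is checked by writing $\varphi_{\ell}$ as the power series $\varphi_{\ell}(z)=\sum_{k\ge 0}z^{k}/(k+\ell)!$ (the same series used in the proof of Theorem \ref{Thm4.1}), so that $t^{\ell}\varphi_{\ell}(tM)=\sum_{k\ge 0}t^{k+\ell}M^{k}/(k+\ell)!$, differentiating term by term, and reindexing the resulting series to recover the two terms on the right. Local uniform convergence of the series in $t$ justifies the term-by-term differentiation.

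Next I would set $u(t)=t^{\ell}\varphi_{\ell}\big(t(A+E)\big)$ and $w(t)=t^{\ell}\varphi_{\ell}(tA)$ and let $d(t)=u(t)-w(t)$. From the auxiliary identity, $u'(t)=(A+E)u(t)+\frac{t^{\ell-1}}{(\ell-1)!}I$ and $w'(t)=Aw(t)+\frac{t^{\ell-1}}{(\ell-1)!}I$ for $\ell\ge1$ (for $\ell=0$ the inhomogeneous terms are simply absent), so subtracting cancels the inhomogeneous term and gives the linear matrix ODE
\begin{equation*}
d'(t)=A\,d(t)+E\,u(t),\qquad d(0)=0,
\end{equation*}
valid for every $\ell\ge0$. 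Here one uses that $u(0)=w(0)$ in all cases (equal to $O$ for $\ell\ge1$, to $I$ for $\ell=0$), so that the difference indeed has zero initial data regardless of $\ell$.

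Then I would solve this ODE by variation of constants: its unique solution is $d(t)=\int_{0}^{t}\exp\big((t-s)A\big)E\,u(s)\,ds$, which is verified by differentiating under the integral sign and invoking uniqueness for linear ODEs. Evaluating at $t=1$ yields $\varphi_{\ell}(A+E)-\varphi_{\ell}(A)=d(1)=\int_{0}^{1}\exp\big((1-s)A\big)E\,s^{\ell}\varphi_{\ell}\big(s(A+E)\big)\,ds$, and pulling the scalar $s^{\ell}$ past $E$ gives exactly \eqref{equ-phi}.

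The only delicate points are bookkeeping: justifying term-by-term differentiation of the matrix power series (routine, via local uniform convergence) and handling the $\ell=0$ case, where ``$\frac{t^{\ell-1}}{(\ell-1)!}$'' must be read as vacuous and the initial data is $I$ rather than $O$ — but since only the difference $d$ enters, the argument is uniform in $\ell$. I expect no genuine obstacle; the essential observation is simply that the claimed formula is the Duhamel representation of the difference of two inhomogeneous linear systems whose inhomogeneities coincide and hence drop out.
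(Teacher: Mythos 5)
Your proof is correct, but it follows a genuinely different route from the paper. The paper proves \eqref{equ-phi} by induction on $\ell$: it takes the $\ell=0$ case as the known integral (Duhamel) formula for the matrix exponential cited from Higham, then embeds $A$ into the $2n\times 2n$ block matrix $B=\left[\begin{smallmatrix} A & I\\ O & O\end{smallmatrix}\right]$ and perturbs by $\widetilde{E}=\left[\begin{smallmatrix} E & O\\ O & O\end{smallmatrix}\right]$, using the block structure of $\exp\big((1-s)B\big)$ and $\varphi_{\ell}\big(s(B+\widetilde{E})\big)$ to read off the identity for $\varphi_{\ell+1}$ from the $(1,2)$ block of the identity for $\varphi_{\ell}$. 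You instead give a direct, non-inductive argument: the series computation showing $\frac{d}{dt}\big[t^{\ell}\varphi_{\ell}(tM)\big]=M\,t^{\ell}\varphi_{\ell}(tM)+\frac{t^{\ell-1}}{(\ell-1)!}I$ is correct, the inhomogeneous terms indeed cancel in the difference $d(t)=u(t)-w(t)$ with $d(0)=O$ for every $\ell\ge 0$, and variation of constants plus uniqueness for the linear ODE $d'=Ad+Eu$ gives exactly the claimed integral at $t=1$ (the scalar $s^{\ell}$ commutes freely). What your approach buys is uniformity in $\ell$ and self-containedness — you even re-derive the $\ell=0$ exponential case rather than quoting it — at the cost of the ODE machinery (term-by-term differentiation and uniqueness), which you correctly flag as routine; the paper's approach buys reuse of standard block-matrix identities for $\varphi$-functions and of the known exponential perturbation formula, at the cost of an induction and some block bookkeeping. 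Both are sound; yours is arguably the cleaner conceptual explanation of why the formula is a Duhamel representation.
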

\begin{proof}
For the matrix exponential, it follows from \cite[pp.238]{Higham} that
\begin{equation}\label{eqn39}
\exp(A+E)=\exp(A)+\int_0^1\exp\big((1-s)A\big)E\exp\big(s(A+E)\big)ds,
\end{equation}
so \eqref{equ-phi} holds for $\ell=0$. When $\ell \geq 1$, let
$
B=\left[\begin{array}{cc}
A &I \\ O & O
\\\end{array} \right]\in\mathbb{R}^{2n\times 2n},
$
then we get \cite{Higham,ST}
$$
 \varphi_{\ell}(B)=\left[\begin{array}{cc}
\varphi_{\ell}(A) &\varphi_{\ell+1}(A) \\ O & \frac{1}{\ell!}I
\\\end{array} \right].
$$
Denote $\widetilde{E}=\left[\begin{array}{cc}
E &O \\ O & O
\\\end{array} \right]\in\mathbb{R}^{2n\times 2n}$, it is seen that
\begin{equation}\label{equ-1}
\varphi_{\ell}(B+\widetilde{E})-\varphi_{\ell}(B)=\left[\begin{array}{cc}
\varphi_{\ell}(A+E)-\varphi_{\ell}(A) &\varphi_{\ell+1}(A+E)-\varphi_{\ell+1}(A) \\ O & O
\\\end{array} \right].
\end{equation}

By induction, we assume that
\begin{equation}\label{equ-phiB}
\varphi_{\ell}(B+\widetilde{E})-\varphi_{\ell}(B)=\int_{0}^{1}\exp\big((1-s)B\big)s^{\ell}\widetilde{E}\varphi_{\ell}\big(s(B+\widetilde{E})\big)ds.
\end{equation}
From
$$
(1-s)B\exp\big((1-s)B\big)=\exp\big((1-s)B\big)(1-s)B,
$$
and
$$
s(B+\widetilde{E})\varphi_{\ell}\big(s(B+\widetilde{E})\big)=\varphi_{\ell}\big(s(B+\widetilde{E})\big)s(B+\widetilde{E}),
$$
we obtain
$$
\exp\big((1-s)B\big)=\left[\begin{array}{cc}
\exp\big((1-s)A\big) &(1-s)\varphi_{1}\big((1-s)A\big) \\ O & I
\\\end{array} \right],
$$
and
$$\varphi_{\ell}\big(s(B+\widetilde{E})\big)=\left[\begin{array}{cc}
\varphi_{\ell}\big(s(A+E)\big) &s\varphi_{\ell+1}\big(s(A+E)\big) \\ O & \frac{1}{\ell!}I
\\\end{array} \right].$$
Thus, 
\begin{eqnarray*}
&& \exp\big((1-s)B\big)s^{\ell}\widetilde{E}\varphi_{\ell}\big(s(B+\widetilde{E})\big)\\
&=&\left[\begin{array}{cc}
\exp\big((1-s)A\big)s^{\ell}E\varphi_{\ell}\big(s(A+E)\big) &\exp\big((1-s)A\big)s^{\ell+1}E\varphi_{\ell+1}\big(s(A+E)\big) \\ O & O
\\\end{array} \right].
\end{eqnarray*}
Furthermore, (\ref{equ-phiB}) can be rewritten as
\begin{eqnarray}\label{equ-phiB-1}
&&\varphi_{\ell}(B+\widetilde{E})-\varphi_{\ell}(B)\\
&=&\left[\begin{array}{cc}
\int_{0}^{1}\exp\big((1-s)A\big)s^{\ell}E\varphi_{\ell}\big(s(A+E)\big)ds &\int_{0}^{1}\exp\big((1-s)A\big)s^{\ell+1}E\varphi_{\ell+1}\big(s(A+E)\big)ds \\ O & O
\\\end{array} \right].\nonumber
\end{eqnarray}
From (\ref{equ-1}) and (\ref{equ-phiB-1}), we have
$$
\varphi_{\ell+1}(A+E)-\varphi_{\ell+1}(A)=\int_{0}^{1}\exp\big((1-s)A\big)s^{\ell+1}E\varphi_{\ell+1}\big(s(A+E)\big)ds,
$$
which completes the proof.
\end{proof}

Using (\ref{equ-phi}) to substitute for $\varphi_{\ell}\big(s(A+E)\big)$ inside the integral, we get
\begin{equation}
\varphi_{\ell}(A+E)=\varphi_{\ell}(A)+\int_{0}^{1}\exp\big((1-s)A\big)s^{\ell}E\varphi_{\ell}(sA)ds+\mathcal{O}(\|E\|^2),\quad \ell=0,1,2,\ldots
\end{equation}
Combining with \eqref{eqn317}, we present the following definition for the Fr\'{e}chet derivatives of $\varphi$-functions:
\begin{definition}
The Fr\'{e}chet derivatives of $\varphi$-functions at $A$ in the direction $E$ is given by
\begin{equation}\label{eqn477}
L_{\varphi_{\ell}}(A,E)=\int_{0}^{1}\exp\big((1-s)A\big)s^{\ell}E\varphi_{\ell}(sA)ds,\quad \ell=0,1,2,\ldots
\end{equation}
\end{definition}
As a result,
\begin{equation}\label{216}
\varphi_{\ell}(A+E)=\varphi_{\ell}(A)+L_{\varphi_{\ell}}(A,E)+o(\|E\|).
\end{equation}
In summary, the following theorem shows that the values of $\epsilon_{\rm col}$ and $\epsilon_{\rm row}$ used during
the sparse column-row approximation will have a direct impact upon the final accuracy of computing $\varphi_{\ell}(A)$. Note that $XTY^T$ can be {\it any} low-rank approximation to $A$ in this theorem.
\begin{theorem}\label{Thm2.4}
Let $XTY^T$ be a sparse column-row approximation to $A$, and $\|A-XTY^T\|=\varepsilon$. Then we have
\begin{equation}\label{eq2.16}
\|\varphi_{\ell}(A)-\varphi_{\ell}(XTY^T)\|\lesssim{\rm cond}_{\rm abs}(\varphi_{\ell},A)~\varepsilon,
\end{equation}
and
\begin{equation}\label{eq2.17}
\frac{\|\varphi_{\ell}(A)-\varphi_{\ell}(XTY^T)\|}{\|\varphi_{\ell}(A)\|}\lesssim{\rm cond}_{\rm rel}(\varphi_{\ell},A)\frac{\varepsilon}{\|A\|},
\end{equation}
where $\lesssim$ represents omitting the high order term $o(\|E\|)$.
\end{theorem}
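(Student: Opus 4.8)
The plan is to combine the Fréchet-derivative representation \eqref{216} with the definitions \eqref{Eqn4.9} and \eqref{eqn330} of the absolute and relative condition numbers. Set $E = A - XTY^T$, so that $\|E\| = \varepsilon$ by hypothesis. Applying \eqref{216} with this particular $E$ gives
\begin{equation*}
\varphi_{\ell}(A) - \varphi_{\ell}(XTY^T) = \varphi_{\ell}\big((XTY^T) + E\big) - \varphi_{\ell}(XTY^T) = L_{\varphi_{\ell}}(XTY^T, E) + o(\|E\|).
\end{equation*}
Wait—one must be careful about the base point: \eqref{216} is stated at $A$, so it is cleaner to write $\varphi_{\ell}(A) - \varphi_{\ell}(A - E) = L_{\varphi_{\ell}}(A, E) + o(\|E\|)$, which follows from \eqref{216} by replacing $E$ with $-E$ and rearranging (the linearity of $L_{\varphi_{\ell}}(A,\cdot)$ in its second argument lets us absorb the sign). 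Either way, the leading term is a Fréchet derivative evaluated on the fixed perturbation $E$.

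Next I would bound the norm of that leading term. By the very definition \eqref{Eqn4.9} of the absolute condition number,
\begin{equation*}
\|L_{\varphi_{\ell}}(A, E)\| \le \|L_{\varphi_{\ell}}(A)\| \, \|E\| = {\rm cond}_{\rm abs}(\varphi_{\ell}, A)\, \varepsilon,
\end{equation*}
since $\|L_f(A)\|$ is exactly the operator norm $\max_{E\neq O}\|L_f(A,E)\|/\|E\|$. Dropping the $o(\|E\|)$ term (which is what the symbol $\lesssim$ is declared to mean in the statement) yields \eqref{eq2.16}. For the relative bound \eqref{eq2.17}, I would divide both sides of \eqref{eq2.16} by $\|\varphi_{\ell}(A)\|$ and insert the factor $\|A\|/\|A\|$:
\begin{equation*}
\frac{\|\varphi_{\ell}(A) - \varphi_{\ell}(XTY^T)\|}{\|\varphi_{\ell}(A)\|} \lesssim \frac{{\rm cond}_{\rm abs}(\varphi_{\ell}, A)\,\|A\|}{\|\varphi_{\ell}(A)\|}\cdot\frac{\varepsilon}{\|A\|} = {\rm cond}_{\rm rel}(\varphi_{\ell}, A)\,\frac{\varepsilon}{\|A\|},
\end{equation*}
using \eqref{eqn330} to recognize the middle quantity as ${\rm cond}_{\rm rel}(\varphi_{\ell}, A)$.

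There is essentially no hard analytic obstacle here—the theorem is a direct corollary of the Fréchet-derivative machinery already assembled in \eqref{eqn477} and \eqref{216}. The only point requiring a little care is the bookkeeping of the base point and the sign of $E$: \eqref{216} expands $\varphi_\ell$ around $A$, whereas the quantity of interest is the difference between $\varphi_\ell$ at the exact matrix $A$ and at the approximation $XTY^T = A - E$. Because $L_{\varphi_\ell}(A,\cdot)$ is linear in the direction and $o(\|E\|) = o(\|-E\|)$, replacing $E$ by $-E$ causes no trouble, and the operator-norm bound then controls $\|L_{\varphi_\ell}(A,\pm E)\|$ uniformly. A secondary (cosmetic) subtlety is that all of this is stated only up to first order, so I would emphasize explicitly that $\lesssim$ suppresses the higher-order remainder, exactly as the statement already flags.
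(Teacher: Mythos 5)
Your proposal is correct and follows essentially the same route as the paper: set $E=A-XTY^T$, expand to first order via \eqref{216}, bound the leading term by the operator norm $\|L_{\varphi_{\ell}}(A)\|\,\|E\|$ using \eqref{Eqn4.9}, and obtain \eqref{eq2.17} from \eqref{eqn330} and \eqref{eq2.16}. Your extra bookkeeping on the base point and the sign of $E$ (absorbed by the linearity of $L_{\varphi_{\ell}}(A,\cdot)$) is a point the paper passes over silently, but it does not change the argument.
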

\begin{proof}
Let $E=A-XTY^T$, then we get from \eqref{216} that
\begin{eqnarray*}
\|\varphi_{\ell}(A)-\varphi_{\ell}(XTY^T)\|&\lesssim&\|L_{\varphi_{\ell}}(A)\|\|E\|
=\|L_{\varphi_{\ell}}(A)\|\varepsilon\nonumber\\
&=&{\rm cond}_{\rm abs}(\varphi_{\ell},A)\varepsilon,
\end{eqnarray*}
in which the high order term $o(\|E\|)$ is omitted.
The upper bound \eqref{eq2.17} of the relative error is derived from \eqref{eqn330} and \eqref{eq2.16}.
\end{proof}

\section{New strategies for estimating the absolute and relative 2-norm condition numbers}
\setcounter{equation}{0}
By Theorem \ref{Thm2.4}, it is crucial to consider how to estimate the absolute and relative condition numbers ${\rm cond}_{\rm abs}(\varphi_{\ell},A)$ and ${\rm cond}_{\rm rel}(\varphi_{\ell},A)$ efficiently.
Notice that
\begin{equation}\label{eqn4.12}
{\rm vec}(ACB)=(B^T\otimes A){\rm vec}(C).
\end{equation}
Since $L_f$ is a linear operator, we have
\begin{equation}\label{eqn499}
{\rm vec}(L_f(A,E))=K_f(A){\rm vec}(E)
\end{equation}
for some $K_f(A)\in\mathbb{R}^{n^2\times n^2}$ that is independent of $E$. The matrix $K_f(A)$ is refered to as the Kronecker form of the Fr\'{e}chet derivative \cite[pp.60]{Higham}. Specifically, we have 
\begin{equation}\label{eqn4110}
\|L_f(A)\|_F=\lambda_{\max}\big(K_f(A)^T K_f(A)\big)^{1/2}=\|K_f(A)\|_2.
\end{equation}
To estimate $\|K_f(A)\|_2$, the power method can be applied \cite[Algorithm 3.20]{Higham}. However, the power method lacks convergence tests, and because of its linear convergence
rate the number of iteration required is unpredictable. Alternatively, the condition
number is based on the 1-norm \cite[Algorithm 3.22]{Higham}. Although there is no analogue to the relation (\ref{eqn4110}) for the 1-norm, the next result gives
a relation between $\|K_f (A)\|_1$ and $\|L_f (A)\|_1$.
\begin{theorem}\label{Prop1} {\rm\cite[Lemma 3.18]{Higham}}
For $A\in\mathbb{R}^{n\times n}$ and any function $f$,
\begin{equation}\label{eqn4111}
\frac{\|L_f(A)\|_1}{n}\leq \|K_f(A)\|_1\leq n\|L_f(A)\|_1.
\end{equation}
\end{theorem}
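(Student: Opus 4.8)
The plan is to reduce the statement to one elementary fact relating the induced matrix $1$-norm of an $n\times n$ matrix $M$ to the vector $1$-norm of ${\rm vec}(M)$, and then to propagate this fact through the identification \eqref{eqn499}. Throughout, $\|M\|_1$ denotes the induced matrix $1$-norm (the maximum absolute column sum), while $\|v\|_1=\sum_i|v_i|$ is the vector $1$-norm, so that $\|{\rm vec}(M)\|_1$ equals the sum of the absolute values of all entries of $M$.

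First I would establish the two-sided bound
\begin{equation}\label{eqn-vec-bound}
\|M\|_1\le\|{\rm vec}(M)\|_1\le n\,\|M\|_1,\qquad M\in\mathbb{R}^{n\times n}.
\end{equation}
The left inequality holds because $\|{\rm vec}(M)\|_1=\sum_{j=1}^n\big(\sum_{i=1}^n|m_{ij}|\big)$ is a sum of $n$ nonnegative column sums and so is at least as large as the largest one, which is $\|M\|_1$. The right inequality holds because each column sum $\sum_{i=1}^n|m_{ij}|$ is at most $\|M\|_1$, so their total is at most $n\,\|M\|_1$.

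Next, since ${\rm vec}$ is a bijection of $\mathbb{R}^{n\times n}$ onto $\mathbb{R}^{n^2}$, I would parametrise every vector of $\mathbb{R}^{n^2}$ as ${\rm vec}(E)$ and use \eqref{eqn499} to write
$$
\|K_f(A)\|_1=\max_{E\neq O}\frac{\|K_f(A)\,{\rm vec}(E)\|_1}{\|{\rm vec}(E)\|_1}=\max_{E\neq O}\frac{\|{\rm vec}(L_f(A,E))\|_1}{\|{\rm vec}(E)\|_1}.
$$
For the upper bound, apply the right half of \eqref{eqn-vec-bound} to the numerator and the left half to the denominator: for every $E\neq O$,
$$
\frac{\|{\rm vec}(L_f(A,E))\|_1}{\|{\rm vec}(E)\|_1}\le\frac{n\,\|L_f(A,E)\|_1}{\|E\|_1}\le n\,\|L_f(A)\|_1,
$$
the last step being the definition \eqref{Eqn4.9} of $\|L_f(A)\|_1$; maximising over $E$ gives $\|K_f(A)\|_1\le n\,\|L_f(A)\|_1$. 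For the lower bound, reverse the roles — the left half of \eqref{eqn-vec-bound} for the numerator and the right half for the denominator yield
$$
\frac{\|{\rm vec}(L_f(A,E))\|_1}{\|{\rm vec}(E)\|_1}\ge\frac{\|L_f(A,E)\|_1}{n\,\|E\|_1},
$$
and maximising over $E\neq O$ gives $\|K_f(A)\|_1\ge\tfrac{1}{n}\|L_f(A)\|_1$.

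There is no genuine obstacle in this argument; the only point that needs care is to keep the two distinct ``$1$-norms'' apart — the induced matrix norm that appears in $\|L_f(A)\|_1$ and $\|K_f(A)\|_1$ versus the sum-of-all-entries norm that surfaces once ${\rm vec}$ is applied — and to observe that the discrepancy between them is precisely a factor of $n$ in each direction, which is exactly what produces the constants in \eqref{eqn4111}.
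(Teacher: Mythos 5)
Your proof is correct: the two-sided bound $\|M\|_1\le\|{\rm vec}(M)\|_1\le n\|M\|_1$ combined with maximizing the ratio $\|K_f(A)\,{\rm vec}(E)\|_1/\|{\rm vec}(E)\|_1$ over all $E\neq O$ via \eqref{eqn499} is exactly the standard argument for this result, which the paper itself does not reprove but simply quotes from \cite[Lemma 3.18]{Higham}. It is also precisely the $1$-norm analogue of the paper's own proof of the $2$-norm version in Theorem \ref{Prop2}, where the inequality \eqref{eqn4113} plays the role of your vec-bound, so there is nothing further to add.
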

In practice, however, the 2-norm is a more widely used norm than the 1-norm, and the former is preferable for both theoretical analysis and computational purposes. For example, one of the most important properties of 2-norm is the unitary invariance \cite{GV}.
Thus, we focus on the 2-norm condition number instead of the 1-norm condition number in this section. The following theorem establishes a relationship between $\|K_f (A)\|_2$ and $\|L_f (A)\|_2$.
\begin{theorem}\label{Prop2}
For $A\in\mathbb{R}^{n\times n}$ and any function $f$,
\begin{equation}\label{eqn4112}
\frac{\|L_f(A)\|_2}{\sqrt{n}}\leq \|K_f(A)\|_2\leq \sqrt{n}\|L_f(A)\|_2.
\end{equation}
\end{theorem}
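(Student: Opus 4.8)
The plan is to derive both inequalities from two variational descriptions that are already in place, together with the elementary equivalence of the Frobenius and spectral norms on $n\times n$ matrices. First I would record that, by \eqref{eqn4110}, $\|K_f(A)\|_2=\|L_f(A)\|_F=\max_{E\neq O}\|L_f(A,E)\|_F/\|E\|_F$, while by the definition \eqref{Eqn4.9}, $\|L_f(A)\|_2=\max_{E\neq O}\|L_f(A,E)\|_2/\|E\|_2$, the norm on the $n\times n$ matrices being the spectral norm. The only additional ingredient is the two-sided bound $\|M\|_2\le\|M\|_F\le\sqrt{n}\,\|M\|_2$, valid for every $M\in\mathbb{R}^{n\times n}$ (the right inequality because $M$ has at most $n$ nonzero singular values).

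For the upper bound I would fix an arbitrary $E\neq O$, bound the numerator by $\|L_f(A,E)\|_F\le\sqrt{n}\,\|L_f(A,E)\|_2$ and the denominator from below by $\|E\|_F\ge\|E\|_2$, obtaining
$$\frac{\|L_f(A,E)\|_F}{\|E\|_F}\le\sqrt{n}\,\frac{\|L_f(A,E)\|_2}{\|E\|_2}\le\sqrt{n}\,\|L_f(A)\|_2 .$$
Taking the maximum over $E\neq O$ on the left-hand side then gives $\|K_f(A)\|_2\le\sqrt{n}\,\|L_f(A)\|_2$. For the lower bound I would run the same estimate with the two norm inequalities used in the opposite directions: from $\|L_f(A,E)\|_2\le\|L_f(A,E)\|_F$ and $\|E\|_2\ge\|E\|_F/\sqrt{n}$ we get
$$\frac{\|L_f(A,E)\|_2}{\|E\|_2}\le\sqrt{n}\,\frac{\|L_f(A,E)\|_F}{\|E\|_F}\le\sqrt{n}\,\|K_f(A)\|_2 ,$$
and maximizing over $E\neq O$ yields $\|L_f(A)\|_2\le\sqrt{n}\,\|K_f(A)\|_2$, i.e. $\|L_f(A)\|_2/\sqrt{n}\le\|K_f(A)\|_2$, which completes the proof.

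There is essentially no serious obstacle: the statement is a two-sided sandwich once both operator norms are written as suprema of the same Rayleigh-type quotient $\|L_f(A,E)\|/\|E\|$ over all directions $E\in\mathbb{R}^{n\times n}$. The one point that needs care is the bookkeeping of where the factor $\sqrt{n}$ lands — it must be attached to the numerator when proving one inequality and to the denominator when proving the other, so that it always ends up on the correct side. I would also note in passing that $\sqrt{n}$ could be sharpened to $\sqrt{{\rm rank}(L_f(A,E))}$-type constants, but the stated bound is all that is needed for the 2-norm condition-number estimates developed in the remainder of the section.
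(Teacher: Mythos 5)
Your proof is correct and follows essentially the same route as the paper: both arguments compare the Rayleigh quotients $\|L_f(A,E)\|_F/\|E\|_F$ (which, via ${\rm vec}$, is exactly $\|K_f(A){\rm vec}(E)\|_2/\|{\rm vec}(E)\|_2$) and $\|L_f(A,E)\|_2/\|E\|_2$ using the equivalence $\|M\|_2\leq\|M\|_F\leq\sqrt{n}\,\|M\|_2$ applied to numerator and denominator, then maximize over $E\neq O$. The only cosmetic difference is that you invoke \eqref{eqn4110} to identify $\|K_f(A)\|_2$ with the Frobenius-induced operator norm, whereas the paper obtains the same identification directly from $K_f(A){\rm vec}(E)={\rm vec}\big(L_f(A,E)\big)$.
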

\begin{proof}
For any $M\in\mathbb{R}^{n\times n}$, we have
\begin{equation}\label{eqn4113}
\|M\|_2\leq \|M\|_F=\|{\rm vec}(M)\|_2\leq \sqrt{n}\|M\|_2.
\end{equation}
Hence, it is seen from (\ref{eqn499}) and (\ref{eqn4113}) that
\begin{equation}\label{eqn4114}
\frac{\|K_f(A){\rm vec}(E)\|_2}{\|{\rm vec}(E)\|_2}=\frac{\|{\rm vec}\big(L_f(A,E)\big)\|_2}{\|{\rm vec}(E)\|_2}\leq \frac{\sqrt{n}
\|L_f(A,E)\|_2}{\|E\|_2},\quad\forall E\in \mathbb{R}^{n\times n},~E\neq O.
\end{equation}
Similarly,
\begin{equation}\label{eqn4115}
\frac{\|K_f(A){\rm vec}(E)\|_2}{\|{\rm vec}(E)\|_2}\geq \frac{\|L_f(A,E)\|_2}{\sqrt{n}\|E\|_2},\quad\forall E\in \mathbb{R}^{n\times n},~E\neq O.
\end{equation}
Maximizing over all $E$ for  (\ref{eqn4114}) and (\ref{eqn4115}) yields (\ref{eqn4112}).
\end{proof}

Compared (\ref{eqn4112}) with (\ref{eqn4111}), we see that investigating the 2-norm condition number of $K_f(A)$ is preferable to investigating its 1-norm condition number.
We are ready to show how to efficiently evaluate the Fr\'{e}chet 2-condition numbers of $\varphi$-functions for large sparse, low-rank matrices or matrices with fast decaying singular values.
Two novel strategies are proposed to evaluate the absolute and relative condition numbers.

{\bf Strategy I.}~~The key idea of the first strategy is to relate $L_{\varphi_{\ell}}(A)$ to $\varphi_{1}(Z)$ and $\varphi_{\ell+1}(Z)$. We notice from (\ref{eqn477}) and (\ref{eqn4.12}) that
\begin{eqnarray}\label{eqn4.10}
{\rm vec}\big(L_{\varphi_{\ell}}(A,E)\big)&=&\int_{0}^{1}{\rm vec}\big(\exp\big((1-s)A\big)s^{\ell}E\varphi_{\ell}(sA)\big)ds\nonumber\\
&=&\int_{0}^{1}\big(\varphi_{\ell}(sA^T)\otimes\exp\big((1-s)A\big)s^{\ell}\big){\rm vec}(E)ds\nonumber\\
&=&\big(I\otimes\exp(A)\big)\int_{0}^{1}\big(\varphi_{\ell}(sA^T)\otimes\exp(-sA)\big)s^{\ell}ds~{\rm vec}(E).
\end{eqnarray}
Let $X=Q_1R_1,Y=Q_2R_2$ be the (sparse) QR decomposition of $X$ and $Y$, respectively, where $Q_1,Q_2\in\mathbb{R}^{n\times r}$ are orthonormal and $R_1,R_2\in\mathbb{R}^{r\times r}$ are upper triangular. Motivated by Theorem \ref{Thm4.1} and (\ref{eqn4.10}), in Strategy I we make use of
\begin{equation}\label{est11}
{\rm cond}_{\rm abs}^{\rm I}(\varphi_{\ell},\widetilde{A})=\big\|R_1\varphi_1(Z)TR_2^T\cdot R_1\varphi_{\ell+1}(Z)TR_2^T\big\|_2
\end{equation}
as an estimation to the absolute condition number ${\rm cond}_{\rm abs}(\varphi_{\ell},A)$.

Theorem \ref{Thm4.1} also provides a cheap way to estimate 2-norms of $\varphi_{\ell}(A),~\ell=0,1,\ldots,p$.
Indeed, we have from \eqref{4.1} that
\begin{equation}\label{eqn2.6}
\Big|\|R_1[\varphi_{\ell+1}(Z)T]R_2^T\|_2-1/\ell!\Big|\leq\|\varphi_{\ell}(\widetilde{A})\|_2\leq 1/\ell!+\|R_1[\varphi_{\ell+1}(Z)T]R_2^T\|_2.
\end{equation}
Thus, we can use
\begin{equation}\label{eqn25}
\eta_{\ell}=\big\|R_1[\varphi_{\ell+1}(Z)T]R_2^T\big\|_2,\quad \ell=0,1,\ldots,p,
\end{equation}
as approximations to $\|\varphi_{\ell}({A})\|_2$.
In view of (\ref{eqn25}), the relative condition number ${\rm cond}_{\rm rel}(\varphi_{\ell},A)$
can be approximated by using
\begin{equation}\label{est12}
{\rm cond}_{\rm rel}^{\rm I}(\varphi_{\ell},\widetilde{A})=\frac{\|A\|_2\|R_1\varphi_1(Z)TR_2^T\cdot R_1\varphi_{\ell+1}(Z)TR_2^T\|_2}{\|R_1[\varphi_{\ell+1}(Z)T]R_2^T\|_2}.
\end{equation}
Recall that there is no need to form and store the Q-factors $Q_1$ and $Q_2$ in practice.

{\bf Strategy II.}~~The key idea of the second strategy is to relate $L_{\varphi_{\ell}}(A)$ to $L_{\varphi_{\ell+1}}(Z)$. Recall that $\varphi_{\ell}(z)$ can be expressed as the following power series whose radius of convergence is $\infty$:
$$
\varphi_{\ell}(z)=\sum_{i
=0}^{\infty}\frac{z^i}{(i+\ell)!}.
$$
The following proposition can be viewed as a generalization of Theorem \ref{Thm4.1} to any matrix function in power series.
\begin{prop}
Suppose that the power series $f(z)=\sum_{i=0}^{\infty}\alpha_i z^i$ has radius of convergence $\rho$. Let $\widetilde{A}=XTY^T\in\mathbb{R}^{n\times n}$, where $X,Y\in\mathbb{R}^{n\times r}$ and $T\in\mathbb{R}^{r\times r}$. Let $g(z)=\sum_{i=1}^{\infty}\alpha_i z^{i-1}$ and suppose that $\|\widetilde{A}\|<\rho$, then
$$
f(\widetilde{A})=f(O)+Xg(Z)TY^T,
$$
where $Z=T(Y^TX)\in\mathbb{R}^{r\times r}$.
\end{prop}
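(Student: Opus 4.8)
The plan is to mimic the power-series manipulation already used in the proof of Theorem~\ref{Thm4.1}, which is exactly the special case $f=\varphi_\ell$. First I would note that since $\|\widetilde{A}\|<\rho$, the series $f(\widetilde{A})=\sum_{i=0}^{\infty}\alpha_i(XTY^T)^i$ converges absolutely, so all rearrangements below are legitimate. I would split off the $i=0$ term, which contributes $\alpha_0 I=f(O)$, and for $i\ge 1$ use the key identity $(XTY^T)^i=X(TY^TX)^{i-1}TY^T=X Z^{i-1}TY^T$, where $Z=T(Y^TX)$. Factoring $X$ on the left and $TY^T$ on the right out of the sum then gives
\begin{equation*}
f(\widetilde{A})=f(O)+X\Bigl(\sum_{i=1}^{\infty}\alpha_i Z^{i-1}\Bigr)TY^T=f(O)+X g(Z)TY^T,
\end{equation*}
which is the claimed formula, provided the inner series $\sum_{i=1}^{\infty}\alpha_i Z^{i-1}=g(Z)$ converges.

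The one genuine point requiring care — and the step I expect to be the main obstacle — is justifying convergence of $g(Z)$, i.e.\ that $\|Z\|<\rho$ (or at least that $Z$ lies inside the domain where $g$ is defined by its series). This does not follow from $\|\widetilde{A}\|<\rho$ in general for arbitrary $X,Y$ with a naive submultiplicative bound, since $\|Z\|=\|T Y^TX\|$ need not be $\le\|XTY^T\|$. I would handle this by observing that $Z=TY^TX$ and $\widetilde{A}=XTY^T$ have the same nonzero eigenvalues (both equal the nonzero eigenvalues of the rank-$\le r$ matrix $XTY^T$), so the spectral radius satisfies $\varrho(Z)=\varrho(\widetilde{A})\le\|\widetilde{A}\|<\rho$. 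Since a matrix power series $\sum\alpha_i M^i$ with scalar radius of convergence $\rho$ converges whenever $\varrho(M)<\rho$ (a standard fact, e.g.\ \cite[Theorem~4.7]{Higham}), $g(Z)$ is well defined, and the manipulation goes through. An alternative, slightly slicker route: apply the already-established convergent rearrangement to the partial sums $S_N(\widetilde A)=\sum_{i=0}^{N}\alpha_i\widetilde A^i=f(O)+X S_N'(Z)TY^T$ with $S_N'(z)=\sum_{i=1}^{N}\alpha_i z^{i-1}$, and pass to the limit, using that $S_N(\widetilde A)\to f(\widetilde A)$ and, by the spectral-radius argument, $S_N'(Z)\to g(Z)$.

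Finally I would remark that Theorem~\ref{Thm4.1} is recovered by taking $f=\varphi_\ell$, for which $\alpha_i=1/(i+\ell)!$, so that $f(O)=\varphi_\ell(O)=\tfrac{1}{\ell!}I$ and $g(z)=\sum_{i=1}^{\infty}z^{i-1}/(i+\ell)!=\sum_{j=0}^{\infty}z^{j}/(j+\ell+1)!=\varphi_{\ell+1}(z)$, giving $g(Z)T=\varphi_{\ell+1}(Z)T$ exactly as in \eqref{4.1}. This both serves as a sanity check and explains why the proposition is labelled a generalization. The whole argument is short; essentially all the content beyond Theorem~\ref{Thm4.1}'s proof is the spectral-radius observation needed to license convergence of $g(Z)$.
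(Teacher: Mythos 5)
Your proof is correct and takes essentially the same route as the paper's: split off the $\alpha_0 I=f(O)$ term and use $\widetilde{A}^{\,i}=XZ^{i-1}TY^T$ for $i\ge 1$ to factor the series as $f(O)+Xg(Z)TY^T$. The only addition is your justification that $g(Z)$ converges via $\varrho(Z)=\varrho(\widetilde{A})\le\|\widetilde{A}\|<\rho$, a point the paper's proof passes over silently, so your argument is a slightly more careful version of the same computation.
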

\begin{proof}
It is seen that $\widetilde{A}^k=XZ^{k-1}TY^T,~k\geq1$. Thus,
\begin{eqnarray*}
f(\widetilde{A})&=&\alpha_0I+\alpha_1\widetilde{A}+\cdots+\alpha_k\widetilde{A}^k+\cdots\\
&=&\alpha_0I+X\big[\alpha_1I+\alpha_2Z+\cdots+\alpha_kZ^{k-1}+\cdots\big]TY^T\\
&=&f(O)+Xg(Z)TY^T.
\end{eqnarray*}
\end{proof}

The following theorem gives closed-form formulae for $K_f(\widetilde{A})$ and $K_g(Z)$.
\begin{theorem}\label{Thm5}
Under the above notations, we have
\begin{equation}\label{eqn4.13}
K_g(Z)=\sum_{i=2}^{\infty}\alpha_i\sum_{j=1}^{i-1}\big((Z^T)^{i-j-1}\otimes Z^{j-1}\big).
\end{equation}
Denote $W=YT^T$, then
\begin{equation}\label{eqn415}
K_f(\widetilde{A})=\Psi_1+\Psi_2+\Psi_3,
\end{equation}
where
$$
\Psi_1=\alpha_1 I\otimes I,
$$
$$
\Psi_2=\Big(W\otimes I\Big)\sum_{i=2}^{\infty}\alpha_i\big((Z^T)^{i-2}\otimes I\big)\Big(X\otimes I\Big)^T+
\Big(I\otimes X\Big)\sum_{i=2}^{\infty}\alpha_i\big(I\otimes Z^{i-2}\big)\Big(I\otimes W\Big)^T,
$$
and
$$
\Psi_3=\Big(W\otimes X\Big)\Big(\sum_{i=3}^{\infty}\alpha_i\sum_{j=2}^{i-1}\big((Z^T)^{i-j-1}\otimes Z^{j-2}\big)\Big)\Big(X\otimes W\Big)^T.
$$
\end{theorem}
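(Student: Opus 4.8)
The plan is to reduce everything to the classical series formula for the Fr\'{e}chet derivative of a function defined by a power series, and then push that identity through the vec operator using the low-rank relation $\widetilde{A}^{k}=XZ^{k-1}TY^{T}$ for $k\geq1$ (as in the proof of the preceding proposition). Recall that for $h(z)=\sum_{i\geq0}\gamma_i z^{i}$ with radius of convergence $\rho$ and any $M$ whose spectrum lies in $\{|z|<\rho\}$, differentiating $(M+tE)^{i}$ at $t=0$ and summing gives
\[
L_h(M,E)=\sum_{i\geq1}\gamma_i\sum_{j=0}^{i-1}M^{j}EM^{i-1-j},
\]
and applying ${\rm vec}(M^{j}EM^{i-1-j})=\big((M^{T})^{i-1-j}\otimes M^{j}\big){\rm vec}(E)$ yields $K_h(M)=\sum_{i\geq1}\gamma_i\sum_{j=0}^{i-1}(M^{T})^{i-1-j}\otimes M^{j}$. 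The rearranged series converges whenever the spectral radius of $M$ is below $\rho$; for $M=Z$ this is legitimate because the nonzero eigenvalues of $\widetilde{A}=XTY^{T}$ coincide with those of $Z=TY^{T}X$, so $\rho(Z)=\rho(\widetilde{A})\leq\|\widetilde{A}\|<\rho$.

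For \eqref{eqn4.13} I would simply apply the displayed formula with $h=g$, i.e.\ $\gamma_i=\alpha_{i+1}$, and $M=Z$; shifting the outer index by one and the inner index by one turns $\sum_{i\geq1}\alpha_{i+1}\sum_{j=0}^{i-1}(Z^{T})^{i-1-j}\otimes Z^{j}$ into $\sum_{i\geq2}\alpha_i\sum_{j=1}^{i-1}(Z^{T})^{i-j-1}\otimes Z^{j-1}$, which is exactly \eqref{eqn4.13}. This step is pure bookkeeping.

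The substantive computation is $K_f(\widetilde{A})$. Applying the formula with $h=f$, $M=\widetilde{A}$, I would split the double sum $\sum_{i\geq1}\alpha_i\sum_{j=0}^{i-1}\widetilde{A}^{j}E\widetilde{A}^{i-1-j}$ into three groups according to which of the exponents $j$, $i-1-j$ vanish: (i) $i=1$, contributing $\alpha_1 E$, hence $\Psi_1=\alpha_1 I\otimes I$ after vectorization; (ii) exactly one of $j$, $i-1-j$ equal to $0$ (so $i\geq2$), contributing $\alpha_i\big(E\widetilde{A}^{i-1}+\widetilde{A}^{i-1}E\big)$; (iii) both $j\geq1$ and $i-1-j\geq1$ (so $i\geq3$). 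For group (ii), substitute $\widetilde{A}^{i-1}=XZ^{i-2}TY^{T}$, vectorize, and—writing $W=YT^{T}$ so that $TY^{T}=W^{T}$—factor the resulting Kronecker matrices as $(W\otimes I)\big((Z^{T})^{i-2}\otimes I\big)(X\otimes I)^{T}$ and $(I\otimes X)\big(I\otimes Z^{i-2}\big)(I\otimes W)^{T}$; summing over $i\geq2$ gives $\Psi_2$. For group (iii), use $\widetilde{A}^{j}E\widetilde{A}^{i-1-j}=XZ^{j-1}T(Y^{T}EX)Z^{i-2-j}TY^{T}$, vectorize with ${\rm vec}(Y^{T}EX)=(X^{T}\otimes Y^{T}){\rm vec}(E)$, absorb $YT^{T}=W$ on the left and $TY^{T}=W^{T}$, $X^{T}$ on the right, to obtain $(W\otimes X)\big((Z^{T})^{i-2-j}\otimes Z^{j-1}\big)(X\otimes W)^{T}$; shifting the inner index $j\mapsto j-1$ (so its range becomes $2$ to $i-1$) produces exactly $\Psi_3$. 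Adding $\Psi_1+\Psi_2+\Psi_3$ yields \eqref{eqn415}.

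The main obstacle I anticipate is entirely organizational rather than conceptual: keeping the summation ranges and, above all, the placement of the ``floating'' factor $T$ under control. Since $\widetilde{A}^{k}=XZ^{k-1}TY^{T}$ carries $T$ only on the right, one must absorb it into $W=YT^{T}$ on the correct side of each Kronecker product—and at the right moment—to arrive at the symmetric form $(W\otimes X)(\cdots)(X\otimes W)^{T}$ in $\Psi_3$ and the mixed forms in $\Psi_2$. Care is likewise needed with the empty inner sums (at $i=2$ in group (iii) and at $i=1$ in group (ii)) so that $\Psi_1,\Psi_2,\Psi_3$ begin at $i=1,2,3$ respectively, as stated. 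Finally, it is worth recording at the outset that $\|\widetilde{A}\|<\rho$, together with $\rho(Z)\leq\|\widetilde{A}\|$, guarantees absolute convergence of all the rearranged Kronecker series, which is precisely what licenses the term-by-term manipulations above.
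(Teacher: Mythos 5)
Your proposal is correct and follows essentially the same route as the paper: write $L_f(\widetilde{A},E)$ (and $L_g(Z,F)$) as the standard double power series, split according to whether the exponents on the left/right powers of $\widetilde{A}$ vanish (giving the $i=1$, $i\geq2$, $i\geq3$ groups), substitute $\widetilde{A}^{k}=XZ^{k-1}W^{T}$ with $W=YT^{T}$, and vectorize each group to obtain $\Psi_1,\Psi_2,\Psi_3$. Your extra remark on absolute convergence via $\rho(Z)\leq\|\widetilde{A}\|<\rho$ is a harmless (and welcome) addition that the paper leaves implicit.
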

\begin{proof}
It follows from (\ref{eqn38}) and the expression of $g(x)$ that
$$
L_g(Z,F)=\sum_{i=2}^{\infty}\alpha_i\sum_{j=1}^{i-1}Z^{j-1}FZ^{i-j-1},\quad \forall F\in\mathbb{R}^{r\times r}.
$$
By (\ref{eqn4.12}),
\begin{eqnarray*}
{\rm vec}\big(L_g(Z,F)\big)&=&\sum_{i=2}^{\infty}\alpha_i\sum_{j=1}^{i-1}{\rm vec}(Z^{j-1}FZ^{i-j-1})\nonumber\\
&=&\sum_{i=2}^{\infty}\alpha_i\sum_{j=1}^{i-1}\big((Z^T)^{i-j-1}\otimes Z^{j-1}\big){\rm vec}(F),
\end{eqnarray*}
so we get (\ref{eqn4.13}). Similarly, for any $E\in\mathbb{R}^{n\times n}$, we have
\begin{eqnarray}\label{eqn420}
L_f(\widetilde{A},E)&=&\sum_{i=1}^{\infty}\alpha_i\sum_{j=1}^{i}\widetilde{A}^{j-1}E\widetilde{A}^{i-j}\nonumber\\
&=&\alpha_1E+\alpha_2(E\widetilde{A}+\widetilde{A}E)+\sum_{i=3}^{\infty}\alpha_i\Big(E\widetilde{A}^{i-1}+\sum_{j=2}^{i-1}\widetilde{A}^{j-1}E\widetilde{A}^{i-j}+\widetilde{A}^{i-1}E\Big)\nonumber\\
&=&\alpha_1E+\sum_{i=2}^{\infty}\alpha_i(E\widetilde{A}^{i-1}+\widetilde{A}^{i-1}E)+\sum_{i=3}^{\infty}\alpha_i\sum_{j=2}^{i-1}\widetilde{A}^{j-1}E\widetilde{A}^{i-j}.
\end{eqnarray}
As a result,
\begin{equation}\label{eqn4.20}
{\rm vec}(\alpha_1E)=(\alpha_1 I\otimes I){\rm vec}(E)=\Psi_1 {\rm vec}(E),
\end{equation}
and we have from $\widetilde{A}^{i-1}=XZ^{i-2}TY^T=XZ^{i-2}W^T~(i\geq 2)$ that
\begin{eqnarray}\label{eqn4.21}
{\rm vec}\Big(\sum_{i=2}^{\infty}\alpha_i(E\widetilde{A}^{i-1}+\widetilde{A}^{i-1}E)\Big)&=&{\rm vec}\Big(\sum_{i=2}^{\infty}\alpha_i\big(EXZ^{i-2}W^T+XZ^{i-2}W^TE\big)\Big)\nonumber\\
&=&\sum_{i=2}^{\infty}\alpha_i\big(W(Z^T)^{i-2}X^T\otimes I+I\otimes XZ^{i-2}W^T\big){\rm vec}(E)\nonumber\\
&=&\Big(\sum_{i=2}^{\infty}\alpha_i(W\otimes I)\big((Z^T)^{i-2}\otimes I\big)(X\otimes I)^T\nonumber\\
&&+\sum_{i=2}^{\infty}\alpha_i(I\otimes X)\big(I\otimes Z^{i-2}\big)(I\otimes W)^T\big]\Big){\rm vec}(E)\nonumber\\
&=&\Psi_2{\rm vec}(E).
\end{eqnarray}
Moreover,
\begin{eqnarray}\label{eqn4.23}
{\rm vec}\Big(\sum_{i=3}^{\infty}\alpha_i\sum_{j=2}^{i-1}\widetilde{A}^{j-1}E\widetilde{A}^{i-j}\Big)&=&\sum_{i=3}^{\infty}\alpha_i\sum_{j=2}^{i-1}{\rm vec}\big(XZ^{j-2}W^T\cdot E\cdot XZ^{i-j-1}W^T\big)\nonumber\\
&=&\sum_{i=3}^{\infty}\alpha_i\sum_{j=2}^{i-1}\big((W(Z^T)^{i-j-1}X^T) \otimes (XZ^{j-2}W^T)\big){\rm vec}(E)\nonumber\\
&=&\sum_{i=3}^{\infty}\alpha_i\sum_{j=2}^{i-1}(W\otimes X)\big((Z^T)^{i-j-1}\otimes Z^{j-2}) (X\otimes W)^T{\rm vec}(E)\nonumber\\
&=&\Psi_3{\rm vec}(E),
\end{eqnarray}
and (\ref{eqn415}) follows from (\ref{eqn420})--(\ref{eqn4.23}).
\end{proof}

\begin{rem}
Theorem \ref{Thm5} indicates that $L_f(\widetilde{A})$ and $L_g(Z)$ are closely related. More precisely, let
$$
\phi_{i}(Z)=\sum_{j=2}^{i-1}\big((Z^T)^{i-j-1}\otimes Z^{j-2}\big),\quad i=3,4,\ldots
$$
then
$\Psi_3=\big(W\otimes X\big)\sum_{i=3}^{\infty}\alpha_i\phi_{i}(Z)\big(X\otimes W\big)^T$.
On the other hand, if we denote
$$
\psi_i(Z)=\sum_{j=1}^{i-1}\big((Z^T)^{i-j-1}\otimes Z^{j-1}\big), \quad i=2,3,\ldots
$$
then $K_g(Z)=\sum_{i=2}^{\infty}\alpha_i\psi_i(Z)$, and it is seen from {\rm(}\ref{eqn4.13}{\rm)} that
$$
\psi_{i-1}(Z)=\phi_{i}(Z),\quad i=3,4,\ldots
$$
\end{rem}

Let $\varphi_{\ell}(\widetilde{A})=\frac{1}{\ell!}I+X\big[\varphi_{\ell+1}(Z)T\big]Y^T$ and let $\widetilde{\varphi_{\ell}(\widetilde{A})}=\frac{1}{\ell!}I+X\big[\varphi_{\ell+1}(Z+F)T\big]Y^T$. Then we have from (\ref{eqn317}) that
$$
L_{\varphi_{\ell+1}}(Z,F)=\varphi_{\ell+1}(Z+F)-\varphi_{\ell+1}(Z)+o(\|F\|),\quad  \forall F\in\mathbb{R}^{r\times r}.
$$
Hence,
\begin{eqnarray*}
\widetilde{\varphi_{\ell}(\widetilde{A})}-\varphi_{\ell}(A)&=&X\big[\varphi_{\ell+1}(Z+F)-\varphi_{\ell+1}(Z)\big]TY^T\\
&=&XL_{\varphi_{\ell+1}}(Z,F)TY^T+o(\|F\|).
\end{eqnarray*}
Let $X=Q_1R_1,Y=Q_2R_2$ be the (sparse) QR decomposition of $X$ and $Y$, respectively, where $R_1,R_2\in\mathbb{R}^{r\times r}$. Inspired by Theorem \ref{Thm5},
in Strategy II we make use of
\begin{equation}\label{Eqn4.11}
{\rm cond}_{\rm abs}^{\rm II}(\varphi_{\ell},\widetilde{A})=\|XL_{\varphi_{\ell+1}}(Z)TY^T\|_2=\|R_1L_{\varphi_{\ell+1}}(Z)TR_2^T\|_2
\end{equation}
as an approximation to the absolute 2-condition number ${\rm cond}_{\rm abs}(\varphi_{\ell},A)$.
And the relative 2-condition number ${\rm cond}_{\rm rel}(\varphi_{\ell},A)$
can be approximated by
\begin{equation}\label{Eqn4112}
{\rm cond}_{\rm rel}^{\rm II}(\varphi_{\ell},\widetilde{A})=\frac{\|A\|_2\|R_1L_{\varphi_{\ell+1}}(Z)TR_2^T\|_2}{\|R_1[\varphi_{\ell+1}(Z)T]R_2^T\|_2}.
\end{equation}
Similar to Strategy I, there is no need to form and store $Q_1$ and $Q_2$, and the key is to
evaluate 2-norms of some $r$-by-$r$ matrices.

\section{Numerical experiments}

\setcounter{equation}{0}

In this section, we perform some numerical experiments to illustrate the numerical behavior of our new method.
All the numerical experiments were run
on a Dell PC with eight cores Intel(R) Core(TM)i7-2600 processor with CPU
3.40 GHz and RAM 16.0 GB, under the Windows 7 with 64-bit operating system. All the numerical results were obtained from MATLAB R2015b implementations
with machine precision $\epsilon\approx 2.22\times 10^{-16}$.

In all the examples, the
sparse column-row approximation of $A$ is computed by using the MATLAB functions {\tt scra.m} and {\tt spqr.m} due to G.W. Stewart\footnote{\it ftp://ftp.cs.umd.edu/pub/stewart/reports/Contents.html.}, where the tolerance {\tt tol} is taken as $\epsilon_{\rm col}=\epsilon_{\rm row}=10^{-5}$.
In order to estimate the rank of a matrix, we consider the {structural rank} of $A$, i.e., {\tt sprank$(A)$} that is obtained from running the MATLAB built-in function {\tt sprank.m}.
The matrix exponential is calculated by using the MATLAB built-in function {\tt expm.m}, while the $\varphi_{\ell}(\ell\geq 1)$ functions are computed by using
the {\tt phipade.m} function of the MATLAB package EXPINT \cite{Ber2}.


\subsection{An application to data dimensionality reduction}

In this example, we show efficiency of our new method for computing matrix exponentials of large scale and low-rank matrices.
Many data mining problems involve data
sets represented in very high-dimensional spaces. In order to handle
high dimensional data, the dimensionality needs to be
reduced.
Linear
discriminant analysis (LDA) is one of notable subspace transformation
methods for dimensionality reduction \cite{DHS}.
LDA encodes discriminant
information by maximizing the between-class scatter,
and meanwhile minimizing the within-class scatter in
the projected subspace. Let $X=[{\bf a}_{1},{\bf a}_{2},\ldots,{\bf a}_{m}]$ be a set of training samples in an $n$-dimensional feature space, and
assume that
the original data is partitioned into $K$ classes as
$X=[X_1,X_2,\ldots,X_K]$.
We denote by $m_{j}$ the number of samples in the $j$-th class, and thus $\sum_{j=1}^{K}m_{j}=m$. Let ${\bf c}_{j}$ be the centroid of the $j$-th class, and ${\bf c}$ be the global centroid of the training data set.
If we denote ${\bf e}_{j}=[1,1,\ldots,1]^{T} \in \mathbb{R}^{m_{j}}$, then the within-class scatter matrix is defined as
$$
S_{W}=\sum_{j=1}^{K}\sum_{{\bf a}_{i}\in X_{j}}({\bf a}_{i}-{\bf c}_{j})({\bf a}_{i}-{\bf c}_{j})^{T}=H_WH_W^T,
$$
where
$H_{W}=[X_{1}-{\bf c}_{1}{\bf e}_{1}^{T},\ldots,X_{K}-{\bf c}_{K}{\bf e}_{K}^{T}]\in\mathbb{R}^{n\times m}$.
The between-class scatter matrix is defined as
$$
S_{B}=\sum_{j=1}^{K}n_{j}({\bf c}_{j}-{\bf c})({\bf c}_{j}-{\bf c})^{T}=H_BH_B^T,
$$
where
$H_B=[\sqrt{n_{1}}({\bf c}_{1}-{\bf c}),\sqrt{n_{2}}({\bf c}_{2}-{\bf c}),\ldots,\sqrt{n_{K}}({\bf c}_{K}-{\bf c})]\in\mathbb{R}^{n \times K}$.
The LDA method is realized by maximizing the between-class scatter distance while minimizing the total scatter distance, and the optimal projection matrix can be obtained from solving the following large scale generalized eigenproblem
\begin{equation}\label{1.3}
S_{B}{\bf x}= \lambda S_{W}{\bf x}.
\end{equation}

However, the dimension of real data usually exceeds the number of training
samples in practice (i.e., $n\gg m$),
which results in $S_W$ and $S_B$ being singular.
Indeed, suppose that the
training vectors are linearly independent, then the rank of $S_B$ and $S_W$
is $K-1$ and $m-K$, respectively, which is much smaller than the dimensionality $n$ \cite{DHS}.
This is called the small-sample-size (SSS) or undersampled
problem \cite{DHS,PP}. It is an intrinsic limitation of the classical
LDA method, and is also a common problem in classification
applications \cite{PP}.
In other words, the SSS problem stems from generalized
eigenproblems with singular matrices.
So as to cure this drawback, a novel method based on matrix exponential, called exponential discriminant analysis
method (EDA), was proposed in \cite{ZF}. Instead of (\ref{1.3}), the EDA method solve the following {\it generalized matrix exponential eigenproblem} \cite{ZF}
\begin{equation}\label{eqn3.12}
\textrm{exp}(S_{B}){\bf x}= \lambda \textrm{exp}(S_{W}){\bf x}.
\end{equation}
The EDA method is described as follows, for more details, refer to \cite{ZF}.
\begin{algorithm}{\rm\cite{ZF} \label{Alg1} {\bf The exponential discriminant analysis method (EDA)}}\\
\textbf{Input:} The data matrix $X=[{\bf a}_{1},{\bf a}_{2},\ldots,{\bf a}_{m}]\in\mathbb{R}^{n\times m}$, where ${\bf a}_{j}$ represernts the $j$-th training image.\\
\textbf{Output:} The projection matrix $V$.\\
1.~Compute the matrices $S_{B}$, $S_{W}$, ${\rm exp}(S_{B})$, and ${\rm exp}(S_{W})$;\\
2.~Compute the eigenvectors $\{{\bf x}_{i}\}$ and
eigenvalues $\{\lambda_{i}\}$ of ${\rm exp}(S_{W})^{-1}{\rm exp}(S_{B})$;\\
3.~Sort the eigenvectors $V=\{{\bf x}_{i}\}$
 according to $\lambda_{i}$ in decreasing order;\\
4.~Orthogonalize the columns of the projection matrix $V$.
\end{algorithm}

As both $\textrm{exp}(S_{W})$ and $\exp(S_B)$ are symmetric positive definite (SPD), the difficulty of SSS problem can be cured naturally in the EDA method. The framework of the EDA method for dimensionality reduction has gained wide attention in recent years \cite{Ah,DB,WCP,Wang,YPan,ZF}.
However, the time complexity of EDA is dominated by
the computation of ${\rm exp}(S_B)$ and ${\rm exp}(S_W)$, which is prohibitively large as data dimension is large \cite{ZF}.
By Theorem \ref{Thm4.1}, we can compute the large matrix exponentials as follows:

\begin{Cor}\label{Cor3.2}
Under the above notations, we have that
\begin{equation}\label{eqn314}
\exp(S_B)=I+H_B\big[\varphi_1(H_B^TH_B)\big] H_B^T,
\end{equation}
and
\begin{equation}\label{eqn315}
\exp(S_W)=I+H_W\big[\varphi_1(H_W^TH_W)\big]H_W^T.
\end{equation}
\end{Cor}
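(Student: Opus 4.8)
The plan is to read both identities off as the $\ell=0$ case of Theorem \ref{Thm4.1} applied to the symmetric factorizations $S_B=H_BH_B^T$ and $S_W=H_WH_W^T$. Concretely, I would write $S_B=XTY^T$ with $X=H_B\in\mathbb{R}^{n\times K}$, $T=I_K$, and $Y=H_B$; then the reduced matrix of Theorem \ref{Thm4.1} is $Z=T(Y^TX)=H_B^TH_B\in\mathbb{R}^{K\times K}$. Substituting $\ell=0$ into \eqref{4.1}, using $\varphi_0=\exp$ and $1/0!=1$, gives
$$
\exp(S_B)=\varphi_0(H_BH_B^T)=I+H_B\big[\varphi_1(H_B^TH_B)\,I\big]H_B^T=I+H_B\big[\varphi_1(H_B^TH_B)\big]H_B^T,
$$
which is \eqref{eqn314}; \eqref{eqn315} follows by the identical argument with $H_W\in\mathbb{R}^{n\times m}$ in place of $H_B$, so that $Z=H_W^TH_W\in\mathbb{R}^{m\times m}$.

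The single point that needs a word of justification is that $H_B$ (resp. $H_W$) need not have full column rank, so $S_B=H_BI_KH_B^T$ is not, strictly speaking, a rank-revealing decomposition as required in the hypothesis of Theorem \ref{Thm4.1}. This is harmless: inspecting the proof of Theorem \ref{Thm4.1}, the only algebraic fact used is $(XTY^T)^k=X(TY^TX)^{k-1}TY^T$ for $k\ge 1$, which holds for arbitrary conformable $X,T,Y$, so the conclusion \eqref{4.1} remains valid here. Alternatively, and perhaps cleaner for a self-contained corollary, I would reproduce the short computation directly: expand $\exp(S_B)=\sum_{k\ge 0}S_B^k/k!$, use $S_B^k=H_B(H_B^TH_B)^{k-1}H_B^T$ for $k\ge 1$, factor $H_B$ on the left and $H_B^T$ on the right, and recognize the residual series $\sum_{k\ge 1}(H_B^TH_B)^{k-1}/k!$ as $\varphi_1(H_B^TH_B)$ via \eqref{equ1}.

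I do not expect a genuine obstacle here; the entire content of the corollary is the observation that $S_B$ and $S_W$ arrive pre-factored as $HH^T$, so the two $n\times n$ exponentials needed in Algorithm \ref{Alg1} reduce to a $\varphi_1$-evaluation of the $K\times K$ matrix $H_B^TH_B$ and the $m\times m$ matrix $H_W^TH_W$, plus symmetric low-rank updates. The only care required is bookkeeping of the matrix sizes and confirming that the rank hypothesis of Theorem \ref{Thm4.1} can be dropped, as noted above; everything else is a direct substitution.
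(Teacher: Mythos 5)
Your argument is exactly the paper's: the corollary is read off from Theorem \ref{Thm4.1} with $\ell=0$, $X=Y=H_B$ (resp.\ $H_W$), $T=I$, so that $Z=H_B^TH_B$ (resp.\ $H_W^TH_W$), and $\varphi_0=\exp$ gives \eqref{eqn314}--\eqref{eqn315}. Your added remark that the full-column-rank (rank-revealing) hypothesis is not actually needed, since the proof of Theorem \ref{Thm4.1} only uses $(XTY^T)^k=X(TY^TX)^{k-1}TY^T$, is a correct and worthwhile refinement, but the substance matches the paper's proof.
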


So we have the following algorithm for the matrix exponential discriminant analysis method.

\begin{algorithm}\label{Alg.2} {\bf An algorithm for computing $\exp(S_B)$ and $\exp(S_W)$}\\
1.~Given the data matrix $X=[{\bf a}_{1},{\bf a}_{2},\ldots,{\bf a}_{m}]\in\mathbb{R}^{n\times m}$, form $H_B$ and $H_W$;\\
2.~Compute $\varphi_1(H_B^TH_B)$ and $\varphi_1(H_W^TH_W)$;\\
3.~Store $H_B,H_W$ and $\varphi_1(H_B^TH_B)$, $\varphi_1(H_W^TH_W)$ for $\exp(S_B)$ and $\exp(S_W)$. If desired, form $\exp(S_B)$ and $\exp(S_W)$ in terms of \eqref{eqn314} and \eqref{eqn315}.
\end{algorithm}

Note that both $H_B$ and $H_W$ are already available in Step 1, so there is no need to perform
rank-revealing decompositions to $S_B$ and $S_W$. As a result, the computation of the two $n\times n$ matrix exponentials $\exp(S_B),\exp(S_W)$ reduces to that of $\varphi_1(H_B^TH_B)\in\mathbb{R}^{K\times K}$ and $\varphi_1(H_W^TH_W)\in\mathbb{R}^{m\times m}$, with $K,m\ll n$.

Next we illustrate the efficiency of Algorithm \ref{Alg.2} for the matrix exponential discriminant analysis method. There are three real-world databases in this example.
The first one is the {\tt ORL} database\footnote{\it http://www.cl.cam.ac.uk/research/dtg/attarchive/facedatabase.html.} that contains 400 face images of $40$
individuals, and the original image size is $92\times 112=10304$.
The second test set is the {\tt Yale} face database taken from
the Yale Center for Computational Vision and Control\footnote{\it http://vision.ucsd.edu/datasets/yale\_face\_dataset\_original/yalefaces.zip.}.
It contains $165$ grayscale images of $K=15$ individuals. The original image
size is $320\times 243=77760$.
The third test set is the {\tt Extended YaleB} database\footnote{\it http://vision.ucsd.edu/\~~leekc/ExtYaleDatabase/Yale\%20Face\%20Database.html.}.
This database contains 5760 single light source images of 10 subjects, each seen under 576 viewing conditions.
A subset of $38$ classes with 2432 images are used in this example, 64 images of per individual with illumination.

In the {\tt ORL} database, the images are
aligned based on eye coordinates and are cropped and scaled to
$n=32\times 32$ and $64\times 64$, respectively; and the original image size with $n=92\times 112$ is also considered.
In the {\tt Yale} and the {\tt Extended YaleB} databases, all images are
aligned based on eye coordinates and are cropped and scaled to
$n=32\times 32,~64\times 64$ and $100\times 100$, respectively.
In this example, a random subset with $3$ images per subject is
taken to form the training set, and the rest of the images are
used as the testing set. Each column of the data matrices is scaled by its 2-norm.

In Algorithm \ref{Alg.2}, the CPU time consists of computing $H_B,H_W$, evaluating $\varphi_1(H_B^TH_B)$ and $\varphi_1(H_W^TH_W)$, as well as forming $\exp(S_B)$ and $\exp(S_W)$ in terms of (\ref{eqn314}) and (\ref{eqn315}). 
In the original EDA algorithm (Algorithm \ref{Alg1}), the CPU time consists of forming $H_B,H_W$, and the computation of $\exp(S_B)$ and $\exp(S_W)$ by using the MATLAB built-in function {\tt expm.m}.

Let $\exp(S_B),\exp(S_W)$ be the ``exact solutions" obtained from running {\tt expm.m},
and let $\widetilde{\exp(S_B)},\widetilde{\exp(S_W)}$ be the approximations obtained from (\ref{eqn314}) and (\ref{eqn315}).
In this example, we define
$$
{\bf Err_B}=\frac{\|\exp(S_B)-\widetilde{\exp(S_B)}\|_F}{\|\exp(S_B)\|_F},\quad {\bf Err_W}=\frac{\|\exp(S_W)-\widetilde{{\rm exp}(S_W)}\|_F}{\|\exp(S_W)\|_F},
$$
as the relative errors of the approximations $\widetilde{{\rm exp}(S_B)},\widetilde{{\rm exp}(S_W)}$, respectively, and denote by
$$
{\bf Rel\_ErrF}=\max({\bf Err_B},{\bf Err_W})
$$
the maximal value of the two relative errors.
Table 1 lists the CPU time in seconds of Algorithm \ref{Alg.2}, {\tt expm.m}, and the values of the maximal relative errors {\tt Rel\_ErrF}.

{\small
\begin{table}[!h]
\begin{center}
\def\temptablewidth{0.9\textwidth}
{\rule{\temptablewidth}{0.8pt}}
\begin{tabular*}{\temptablewidth}{@{\extracolsep{\fill}}cccccr}
{\bf Database}   &$n$ &{\bf Algorithm \ref{Alg.2}}       &{\bf expm.m} & ${\bf Rel\_ErrF}$    \\\hline
ORL          & $1024$  & 0.08 &0.26 & $2.20\times 10^{-15}$  \\
             & $4096$  & 0.26 &17.3 & $3.22\times 10^{-15}$   \\
             & $10304$  &1.28 &261.1 &$4.49\times 10^{-15}$   \\\hline
Yale         & $1024$  & 0.08 &0.25 & $2.11\times 10^{-15}$  \\
             & $4096$  & 0.24 &17.3 & $3.10\times 10^{-15}$   \\
             & $10000$ &1.13 &238.5 &$4.35\times 10^{-15}$   \\\hline
Extended YaleB        & $1024$  & 0.08 &0.27 & $2.13\times 10^{-15}$  \\
             & $4096$  & 0.27 &17.3 & $3.14\times 10^{-15}$   \\
             & $10000$  &1.22 &238.6 &$4.50\times 10^{-15}$   \\
 \end{tabular*}
 {\rule{\temptablewidth}{1pt}}\\
 \end{center}
 \begin{flushleft}
  {\small {\rm {Example 1, Table 1}:~CPU time in seconds and the relative errors for computing $\exp(S_B)$ and $\exp(S_W)$.}}
 \end{flushleft}
 \end{table}
}

We observe from Table 1 that Algorithm \ref{Alg.2} runs much faster than {\tt expm.m}, especially when $n$ is large.
For instance, when the dimensionality of the datasets is around $10^4$, {\tt expm.m} requires about 240 seconds, while our new method only needs about
1.2 seconds, a great improvement. Furthermore, the relative errors of our approximations are in the order of $\mathcal{O}(10^{-15})$, implying that our new method
is numerically stable. Thus, the new method is very efficient and reliable for solving large matrix exponential problems arising in the EDA framework for high dimensionality reduction.

\subsection{Computing $\varphi$-functions of matrices with low rank or fast decaying singular values}

In this example, we show the efficiency of Algorithm \ref{Alg.1} for {\it consecutively} computing several $\varphi$-functions of $A$ with low rank or with fast decaying singular values. The test matrices are available from \cite{Testmatrix,Network}, and Table 2 lists problem characteristics of these matrices. Here the first five matrices are rank-deficient while the last three are full rank but with fast decaying singular values.

In this example, we compare Algorithm \ref{Alg.1} with {\tt expm.m/phipade.m}, that is, {\tt expm.m} for the matrix exponential $\exp(A)$ and {\tt phipade.m} for $\varphi_{\ell}(A),~\ell=1,2,3,4$.
In Algorithm \ref{Alg.1}, the CPU time consists of computing the sparse column-row approximation (SCR), the evaluation of $\varphi_{\ell}(Z)~(\ell=1,2,3,4,5)$ by using {\tt phipade.m}, as well as forming $\varphi_{\ell}(A)$ in terms of (\ref{4.1}), $\ell=0,1,2,3,4$. In {\tt expm.m}/{\tt phipade.m}, the CPU time consists of computing $\varphi_{\ell}(A)$ by using {\tt expm.m} ($\ell=0$) and {\tt phipade.m}, $\ell=1,2,3,4$.
In order to measure the accuracy of the computed solutions, we define the maximal relative error as
$$
{\bf Rel\_ErrF}=\max_{0\leq\ell\leq 4}\frac{\|\varphi_{\ell}(A)-\widetilde{\varphi_{\ell}(\widetilde{A})}\|_F}{\|\varphi_{\ell}(A)\|_F},
$$
where $\varphi_{\ell}(A)$ is the ``exact solution" obtained from {\tt expm.m} as $\ell=0$ and {\tt phipade.m} as $\ell=1,2,3,4$; and $\widetilde{\varphi_{\ell}(\widetilde{A})}$ is the approximation obtained from running Algorithm \ref{Alg.1}. Table 3 lists the numerical results.


{\small
\begin{table}[!h]
\begin{center}
\def\temptablewidth{1\textwidth}
{\rule{\temptablewidth}{0.8pt}}
\begin{tabular*}{\temptablewidth}{@{\extracolsep{\fill}}lcccr}
{\bf Test matrix}  &$n$   &{\bf sprank$(A)$}  &{\bf nnz$(A)$}  &{\bf Description}\\\hline
     man5976       & 5976 &5882               &225046     & Structural problem            \\
     Movies        & 5757 &1275               &24451         & Directed network             \\
     lock3491      & 3491 &3416               &160444      &  Structural problem          \\
     cegb3306      & 3306 &3222               &74916      & Finite element framework             \\
     zenios        & 2873 &266                & 1314           &  Optimization problem  \\
     watt\_1      & 1856 &1856               & 11360            & Computational fluid dynamics         \\
     watt\_2     & 1856 &1856               &11550              & Computational fluid dynamics        \\
     eris1176     & 1176 &1176               & 18552  &  Power network problem            \\

 \end{tabular*}
 {\rule{\temptablewidth}{1pt}}\\
 \end{center}
 \begin{flushleft}
  {\small {\rm {Example 2, Table 2}:~Problem characteristics of the test matrices, where ``nnz$(A)$" denotes the number of nonzero elements of $A$.}}
 \end{flushleft}
 \end{table}
}

\begin{center}
\scalebox{0.5}{\includegraphics{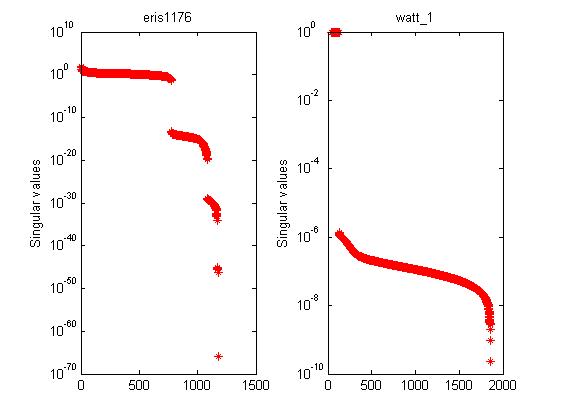}}\\
\end{center}
\begin{flushleft}
{\small Example 2, Figure 1:~Singular values of the {\tt eris1176} matrix and the {\tt watt\_1} matrix.}
\end{flushleft}

\vspace{0.5cm}

{\small
\begin{table}[!h]
\begin{center}
\def\temptablewidth{1\textwidth}
{\rule{\temptablewidth}{0.8pt}}
\begin{tabular*}{\temptablewidth}{@{\extracolsep{\fill}}lcccccr}
{\bf Test matrix}    &{\bf Algorithm 1} &{\bf expm.m}/{\bf phipade.m}  & {\bf Rel\_ErrF} \\\hline
     man5976$^{*}$               &205.5                     &10314.0          &$1.10\times 10^{-12}$  \\
     Movies                      &360.6                      &456.1           &$4.32\times 10^{-14}$  \\
     lock3491$^{*}$              &5.49                        &2469.6          &$5.12\times 10^{-13}$  \\
     cegb3306$^{*}$              &3.11                        &1599.7             &$1.24\times 10^{-13}$  \\
     zenios                      & 0.63                      &3.63          &$1.17\times 10^{-7}$  \\
     watt\_1                    & 0.19                      &30.4          &$1.93\times 10^{-7}$  \\
     watt\_2                    &0.19                     &39.1         &$2.02\times 10^{-7}$  \\
     eris1176$^{*}$             & 6.74                      &85.7          &$2.86\times 10^{-12}$  \\
 \end{tabular*}
 {\rule{\temptablewidth}{1pt}}\\
 \end{center}
 \begin{flushleft}
  {\small {\rm {Example 2, Table 3}:~CPU time in seconds and the maximal relative error for computing $\varphi_{\ell}$ matrix functions, $\ell=0,1,2,3,4$; where ``$^{*}$" denotes we compute $\varphi_{\ell}(-A)$ instead of $\varphi_{\ell}(A)$.}}
 \end{flushleft}
 \end{table}
}

It is seen from Table 3 that Algorithm \ref{Alg.1} works quite well, and we provide a competitive candidate for consecutively computing several $\varphi$-functions of large sparse matrices with low rank or with fast decaying singular values.
Firstly, Algorithm \ref{Alg.1} runs much faster than {\tt expm.m}/{\tt phipade.m}. For instance, 205.5 seconds vs. 10314 seconds for the {\tt man5976} matrix, 5.49 seconds vs. 2469.6 seconds for the {\tt lock3491} matrix, and 3.11 seconds vs. 1599.7 seconds for
the {\tt cegb3306} matrix. Secondly, the accuracy of our approximations is satisfactory in most cases.
However, for the {\tt zenios}, {\tt watt\_1} and {\tt watt\_2} matrices, the relative errors {\tt Rel\_ErrF} are in the order of $\mathcal{O}(10^{-7})$.
In Figure 1, we plot the singular values of the {\tt eris1176} matrix and the {\tt watt\_1} matrix. It is observed that the {\tt eris1176} matrix has faster decaying singular values, while the decaying speed of the singular values of the {\tt watt\_1} matrix is relatively slower.
Indeed, the error $\|A-XTY^T\|_F$ from the SCR decomposition, with respect to the three matrices {\tt zenios}, {\tt watt\_1} and {\tt watt\_2} are about $7.75\times 10^{-6}$, $9.97\times 10^{-6}$ and $9.98\times 10^{-6}$, respectively, while that of the {\tt eris1176} matrix is about $2.49\times 10^{-10}$.
In terms of Theorem \ref{Thm2.4}, the accuracy of the computed solution of the {\tt eris1176} matrix can be higher than that of {\tt zenios}, {\tt watt\_1} and {\tt watt\_2}, provided that the condition numbers are comparable.
Thus, our new method is suitable to $\varphi$-functions of large matrices with low rank or with fast decaying singular values.


\subsection{Estimating the relative and absolute condition numbers}

In this example, we demonstrate the efficiency of Strategy I and Strategy II for estimating the absolute and relative condition numbers of $\varphi$-functions.
There are four test matrices in this example, which are available from \cite{Testmatrix,Network}. The problem characteristics of these matrices are given in Table 4.

We compare Strategy I and Strategy II with the {\tt funm\_condest1.m} function in the Matrix Function Toolbox \cite{Toolbox}.
In Strategy I and Strategy II, the matrices $R_1,R_2$ are obtained from the QR decompositions of $X$ and $Y$, respectively, by the MATLAB built-in function {\tt qr.m}. The matrix $L_{\varphi_{\ell+1}}(Z)$ in \eqref{Eqn4112} is calculated by using the {\tt funm\_condest\_fro.m} function in the Matrix Function Toolbox.
When $\ell=0$, the parameter ``{\tt fun}" in {\tt funm\_condest1.m} is called by the MATLAB built-in function {\tt expm.m}, while $\ell>0$ this parameter is called by the {\tt phipade.m} function in the EXPINT package. The CPU time for both Strategy I and Strategy II is composed of computing the sparse column-row approximation, and calculating (\ref{est11}), (\ref{est12}) or (\ref{Eqn4.11}), (\ref{Eqn4112}), respectively. In \eqref{est12} and \eqref{Eqn4112}, $\|A\|_2$ is evaluated by using the MATLAB built-in function {\tt svds.m}. Tables 5--8 report the numerical results, where ``{\tt Relative\_Est}" and ``{\tt Absolute\_Est}" denote an estimation to the relative and the absolute condition number, respectively.

{\small
\begin{table}[!h]
\begin{center}
\def\temptablewidth{0.8\textwidth}
{\rule{\temptablewidth}{0.8pt}}
\begin{tabular*}{\temptablewidth}{@{\extracolsep{\fill}}lccr}
{\bf Network matrix}  &$n$ &{\bf sprank$(A)$}  &{\bf nnz$(A)$}     \\\hline
California & 9664 &1686 & $16150$   \\
EVA        & 8497 &1303 & $6726$   \\
EPA      & 4772 &986  & $8965$  \\
eris1176   &1176 &1176  & 18552   \\
 \end{tabular*}
 {\rule{\temptablewidth}{1pt}}\\
 \end{center}
 \begin{flushleft}
  {\small {\rm {Example 3, Table 4}:~Problem characteristics of the test matrices, where ``nnz$(A)$" denotes the number of nonzero elements of $A$.}}
 \end{flushleft}
 \end{table}
}

As was pointed out in \cite[pp.64]{Higham}, for the absolute and relative condition numbers, ``what is needed is an estimate that is of the correct order of magnitude in practice---more than one correct significant digit is not needed".
Recall that the {\tt funm\_condest1.m} function estimates the 1-norm relative and absolute condition numbers, while Strategy I and Strategy II estimate the 2-norm condition numbers.
Compared with the numerical results of {\tt funm\_condest1}, we see from Tables 5--8 that both Strategy I and Strategy II capture the correct order of magnitude of the condition numbers in many cases, and we can not tell which one, Strategy I or Strategy II, is {\it definitely} better than the other. We find that Strategy I runs (a little) faster than Strategy II in terms of CPU time. The reason is that we have to evaluate $L_{\varphi_{\ell+1}}(Z)$ iteratively via the {\tt funm\_condest\_fro.m} function.

On the other hand, it is observed from the numerical results that our new strategies often run much faster than {\tt funm\_condest1}. For instance, as $\ell=0$, the new methods used 752.0 and 764.2 seconds for the {\tt California} matrix, respectively, while {\tt funm\_condest1} used 1477.5 seconds. As $\ell=1$, the new methods used 757.9 and 788.7 seconds, respectively, while {\tt funm\_condest1} used 5266.8 seconds. The improvement is impressive. Specifically, as $\ell\geq 2$, for some large matrices such as {\tt California} and {\tt EVA}, {\tt funm\_condest1} fails to converge within 3 hours. As a comparison, our new methods work quite well.
Thus, we benefit from our new strategies, and provide competitive alternatives for estimating the relative and absolute condition numbers of $\varphi$-functions with respect to large sparse matrices.

{\small
\begin{table}[!h]
\begin{center}
\def\temptablewidth{1\textwidth}
{\rule{\temptablewidth}{0.8pt}}
\begin{tabular*}{\temptablewidth}{@{\extracolsep{\fill}}lcccccr}
$\ell$  &{\bf Method} &{\bf Relative\_Est}  &{\bf Absolute\_Est}  &{\bf CPU}    \\\hline
0       & {\tt funm\_condest1}       &$6.82\times 10^3$ & $7.53\times 10^{4}$ &1477.5  \\
       & Strategy I              &$4.01\times 10^2$ & $3.92\times 10^{4}$  &752.0    \\
       & Strategy II             &$21.8$ & $2.13\times 10^{3}$ &764.2   \\\hline
1      & {\tt funm\_condest1}       &$1.33\times 10^4$ & $1.98\times 10^{4}$ &$5266.8$  \\
       & Strategy I              &$6.53\times 10^2$ & $8.58\times 10^{3}$  &757.9    \\
       & Strategy II             &$25.1$ & $3.29\times 10^{2}$ &788.7   \\\hline
2       & {\tt funm\_condest1}       &$-$ & $-$ &$>$3h  \\
       & Strategy I              &$1.38\times 10^3$ & $2.41\times 10^3$  &758.1    \\
       & Strategy II             &15.6 & 27.2 &801.0   \\\hline
3       & {\tt funm\_condest1}      &$-$ & $-$ &$>$3h  \\
       & Strategy I              &$2.57\times 10^3$ & $5.80\times 10^2$  &757.9    \\
       & Strategy II             &34.7 & 7.82 &827.0   \\\hline
4       & {\tt funm\_condest1}       &$-$ & $-$ &$>$3h   \\
       & Strategy I              &$4.10\times 10^3$ & $1.15\times 10^2$  &761.1    \\
       & Strategy II             &82.6 &2.31 &852.1   \\
 \end{tabular*}
 {\rule{\temptablewidth}{1pt}}\\
 \end{center}
 \begin{flushleft}
  {\small {\rm {Example 3, Table 5}:~Estimation of the relative and absolute condition numbers of $\varphi_{\ell}(A),~\ell=0,1,2,3,4$, and the CPU time in seconds, where ``$>$3h" denotes the algorithm fails to converge within 3 hours. The {\tt California} matrix, $n=9664$, {\tt sprank(A)}=1686.}}
 \end{flushleft}
 \end{table}
}

{\small
\begin{table}[!h]
\begin{center}
\def\temptablewidth{1\textwidth}
{\rule{\temptablewidth}{0.8pt}}
\begin{tabular*}{\temptablewidth}{@{\extracolsep{\fill}}lcccccr}
$\ell$  &{\bf Method} &{\bf Relative\_Est}  &{\bf Absolute\_Est}  &{\bf CPU}    \\\hline
0       & {\tt funm\_condest1}       &$7.74\times 10^3$ & $1.83\times 10^{4}$ &700.0  \\
       & Strategy I              &$3.53\times 10^2$ & $4.37\times 10^{2}$  &412.0    \\
       & Strategy II             &$1.07\times 10^3$ & $1.32\times 10^{3}$ &420.1   \\\hline
1      & {\tt funm\_condest1}       &$7.07\times 10^3$ & $7.22\times 10^{3}$ &$5846.1$  \\
       & Strategy I              &$3.17\times 10^2$ & $1.59\times 10^{2}$  &419.4    \\
       & Strategy II             &$5.38\times 10^2$ & $2.69\times 10^{2}$ &425.2   \\\hline
2       & {\tt funm\_condest1}       &$-$ & $-$ &$>$3h  \\
       & Strategy I              &$2.72\times 10^2$ & 45.3  &415.2    \\
       & Strategy II             &$3.21\times 10^2$ & 53.5 &429.8   \\\hline
3       & {\tt funm\_condest1}      &$-$ & $-$ &$>$3h  \\
       & Strategy I              &$2.49\times 10^2$ & 10.4  &408.9    \\
       & Strategy II             &$1.22\times 10^2$ & 5.08 &440.9   \\\hline
4       & {\tt funm\_condest1}       &$-$ & $-$ &$>$3h   \\
       & Strategy I              &$2.36\times 10^2$ & 1.97  &408.0    \\
       & Strategy II             &73.3 &0.61 &447.1   \\
 \end{tabular*}
 {\rule{\temptablewidth}{1pt}}\\
 \end{center}
 \begin{flushleft}
  {\small {\rm {Example 3, Table 6}:~Estimation of the relative and absolute condition numbers of $\varphi_{\ell}(A),~\ell=0,1,2,3,4$, and the CPU time in seconds, where ``$>$3h" denotes the algorithm fails to converge within 3 hours. The {\tt EVA} matrix, $n=8497$, {\tt sprank(A)}=1303.}}
 \end{flushleft}
 \end{table}
}

\vspace{0.5cm}

{\small
\begin{table}[!h]
\begin{center}
\def\temptablewidth{1\textwidth}
{\rule{\temptablewidth}{0.8pt}}
\begin{tabular*}{\temptablewidth}{@{\extracolsep{\fill}}lcccccr}
$\ell$  &{\bf Method}               &{\bf Rel\_Est}  &{\bf Abs\_Est}  &{\bf CPU}    \\\hline
0       & {\tt funm\_condest1}       &$9.90\times 10^{3}$ & $2.60\times 10^{4}$ &90.1  \\
       & Strategy I              &$2.48\times 10^2$ & $1.12\times 10^{3}$  &90.6    \\
       & Strategy II             &$3.34\times 10^4$ & $1.52\times 10^{5}$ &95.4   \\\hline
1       & {\tt funm\_condest1}       &$1.09\times 10^4$ & $8.05\times 10^{3}$ &272.1  \\
       & Strategy I              &$2.51\times 10^2$ & $3.06\times 10^{2}$  &90.3    \\
       & Strategy II             &$2.25\times 10^4$ & $2.74\times 10^{4}$ &100.6   \\\hline
2       & {\tt funm\_condest1}       &$8.95\times 10^3$ & $2.03\times 10^{3}$ &443.7  \\
       & Strategy I              &$2.65\times 10^2$ & 76.9  &90.8    \\
       & Strategy II             &$1.47\times 10^4$ & $4.27\times 10^3$ &105.5   \\\hline
3       & {\tt funm\_condest1}       &$7.86\times 10^3$ & $4.25\times 10^2$ &773.0  \\
       & Strategy I              &$2.85\times 10^2$ & 17.2  &92.5    \\
       & Strategy II             &$9.71\times 10^3$ & $5.87\times 10^2$ &110.7   \\\hline
4       & {\tt funm\_condest1}       &$7.17\times 10^3$ & 75.2 &1357.7  \\
       & Strategy I              &$3.03\times 10^2$ & 3.29  &91.7    \\
       & Strategy II             &$5.29\times 10^3$ & 57.6 &116.7   \\
 \end{tabular*}
 {\rule{\temptablewidth}{1pt}}\\
 \end{center}
 \begin{flushleft}
  {\small {\rm {Example 3, Table 7}:~Estimation of the relative and absolute condition numbers of $\varphi_{\ell}(A)$, $\ell=0,1,2,3,4$, and the CPU time in seconds. The {\tt EPA} matrix, $n=4772$, {\tt sprank(A)}=986.}}
 \end{flushleft}
 \end{table}
}

{\small
\begin{table}[!h]
\begin{center}
\def\temptablewidth{1\textwidth}
{\rule{\temptablewidth}{0.8pt}}
\begin{tabular*}{\temptablewidth}{@{\extracolsep{\fill}}lcccccr}
$\ell$  &{\bf Method} &{\bf Relative\_Est}  &{\bf Absolute\_Est}  &{\bf CPU}    \\\hline
0       & {\tt funm\_condest1}       &$575.1$ & $1.45\times 10^{3}$ &2.47  \\
       & Strategy I              &$1.10\times 10^4$ & $1.88\times 10^{4}$  &7.33    \\
       & Strategy II             &$2.72\times 10^3$ & $4.66\times 10^{3}$ &11.1  \\\hline
1      & {\tt funm\_condest1}       &$1.09\times 10^3$ & $548.7$ &38.2 \\
       & Strategy I              &$1.10\times 10^4$ & $3.69\times 10^{3}$  &7.48    \\
       & Strategy II             &$2.66\times 10^3$ & $890.5$ &14.5   \\\hline
2       & {\tt funm\_condest1}    &$1.75\times 10^3$ & $171.0$ &60.2  \\
       & Strategy I              &$1.10\times 10^4$ & $679.0$  &7.78    \\
       & Strategy II             &$2.71\times 10^3$ & $167.6$ &17.6   \\\hline
3       & {\tt funm\_condest1}    &$2.37\times 10^3$ &$42.2$ &82.2  \\
       & Strategy I              &$1.10\times 10^4$ & $114.9$  &8.09    \\
       & Strategy II             &$2.82\times 10^3$ & 29.4 &20.5   \\\hline
4       & {\tt funm\_condest1}       &$2.85\times 10^3$ & 8.49 &103.4  \\
       & Strategy I              &$1.10\times 10^4$ & $17.6$  &8.26    \\
       & Strategy II             &$2.88\times 10^3$ & 4.61 &24.2   \\
 \end{tabular*}
 {\rule{\temptablewidth}{1pt}}\\
 \end{center}
 \begin{flushleft}
  {\small {\rm {Example 3, Table 8}:~Estimation of the relative and absolute condition numbers of $\varphi_{\ell}(-A),~\ell=0,1,2,3,4$, and the CPU time in seconds. The {\tt eris1176} matrix, $n=1176$, {\tt sprank(A)}=1176.}}
 \end{flushleft}
 \end{table}
}

\subsection{Sharpness of Theorem \ref{Thm2.4}}

In this example, we aim to show the sharpness of Theorem \ref{Thm2.4}. The test matrix is the {\tt watt\_1} matrix used in Example 2; see Table 2. It is a
$1856\times 1856$ full-rank matrix with fast decaying singular values.
So as to show the sharpness of Theorem \ref{Thm2.4}, we denote by
$$
{\bf Abs\_Err2}=\|\varphi_{\ell}(A)-\varphi_{\ell}(\widetilde{A})\|_2,
$$
and
$$
{\bf Rel\_Err2}=\frac{\|\varphi_{\ell}(A)-\varphi_{\ell}(\widetilde{A})\|_2}{\|\varphi_{\ell}(A)\|_2},
$$
the absolute and relative errors of the computed solutions $\varphi_{\ell}(\widetilde{A})$ with respect to $\varphi_{\ell}(A)$ in terms of 2-norm.
The values of ${\rm cond}_{\rm abs}(\varphi_{\ell},A)$ and ${\rm cond}_{\rm rel}(\varphi_{\ell},A)$ in the upper bounds of \eqref{eq2.16} and \eqref{eq2.17} are estimated by Strategy I or Strategy II, respectively, and the corresponding estimations are denoted by ``\eqref{eq2.16}--StrI", ``\eqref{eq2.17}--StrI", and ``\eqref{eq2.16}--StrII", ``\eqref{eq2.17}--StrII", respectively. Table 9 lists the numerical results.

We see from Table 9 that both \eqref{eq2.16} and \eqref{eq2.17} are very sharp, which justify Strategy I and Strategy II for estimating ${\rm cond}_{\rm abs}(\varphi_{\ell},A)$ and ${\rm cond}_{\rm rel}(\varphi_{\ell},A)$. We find that the values of \eqref{eq2.16}--StrII and \eqref{eq2.17}--StrII are a little smaller than those of {\tt Abs\_Err2} and {\tt Rel\_Err2} in many cases. In fact, both Strategy I and Strategy II only give approximations to the absolute and relative condition numbers, which are neither upper bounds nor lower bounds theoretically.

{\small
\begin{table}[!h]
\begin{center}
\def\temptablewidth{1.05\textwidth}
{\rule{\temptablewidth}{0.8pt}}
\begin{tabular*}{\temptablewidth}{@{\extracolsep{\fill}}lcccccc}
  &$\ell=0$   &$\ell=1$  &$\ell=2$ &$\ell=3$  &$\ell=4$  \\\hline
{\bf Abs\_Err2} &$1.07\times 10^{-6}$ &$5.35\times 10^{-7}$&$1.78\times 10^{-7}$ &$4.45\times 10^{-8}$  &$8.91\times 10^{-9}$ \\
{\bf Rel\_Err2} &$3.93\times 10^{-7}$ &$3.11\times 10^{-7}$ &$2.48\times 10^{-7}$ &$2.04\times 10^{-7}$  &$1.73\times 10^{-7}$  \\\hline
\eqref{eq2.16}--StrI &$3.16\times 10^{-6}$ &$1.32\times 10^{-6}$ &$4.01\times 10^{-7}$  &$9.48\times 10^{-8}$  &$1.83\times 10^{-8}$\\
\eqref{eq2.17}--StrI &$1.84\times 10^{-6}$ &$1.84\times 10^{-6}$ &$1.84\times 10^{-6}$  &$1.84\times 10^{-6}$  &$1.84\times 10^{-6}$ \\\hline
\eqref{eq2.16}--StrII &$1.83\times 10^{-7}$ &$5.24\times 10^{-8}$ &$1.19\times 10^{-8}$ &$2.22\times 10^{-9}$  &$3.47\times 10^{-10}$  \\
\eqref{eq2.17}--StrII &$1.06\times 10^{-7}$ &$7.30\times 10^{-8}$ &$5.43\times 10^{-8}$ &$4.30\times 10^{-8}$  &$3.49\times 10^{-8}$  \\
\end{tabular*}
 {\rule{\temptablewidth}{1pt}}\\
 \end{center}
 \begin{flushleft}
  {\small {\rm Example 4, Table 9:~Absolute and relative errors and their estimations, $\ell=0,1,2,3,4$. The {\tt watt\_1} matrix}, with $\|A-\widetilde{A}\|_2\approx 1.07\times 10^{-6}$.
  Here \eqref{eq2.16}--StrI, \eqref{eq2.17}--StrI, \eqref{eq2.16}--StrII, \eqref{eq2.17}--StrII denote the values of ${\rm cond}_{\rm abs}(\varphi_{\ell},A)$ and ${\rm cond}_{\rm rel}(\varphi_{\ell},A)$ in the upper bounds of \eqref{eq2.16} and \eqref{eq2.17}, are estimated by using Strategy I and Strategy II, respectively.
  }
 \end{flushleft}
 \end{table}
}

\section{Concluding remarks}

In this paper we consider the computations, error analysis, implementations and applications
of $\varphi$-functions for large sparse matrices with low rank or with fast decaying singular values. Given a
sparse column-row approximation of $A$, we take into account how to compute the matrix function series $\varphi_{\ell}(A)~(\ell=0,1,2,\ldots,p)$ efficiently,
and to estimate their 2-norm Fr\'{e}chet relative and absolute condition numbers effectively.

The numerical behavior of our new method is closely related to that of reduced-rank approximation of large sparse matrices \cite{Stewart,GWStewart}. Thus, a promising research area is to seek new technologies to improve the performance of the sparse column-row approximation algorithm on very large matrices. Another interesting topic is to combine other advanced algorithms such as the randomized singular value
decomposition algorithm \cite{Random,Mon} with our new strategies for the computation of functions of large sparse matrices.

\section*{Acknowledgments}
We would like to thank Juan-juan Tian for helpful discussions.


\begin{thebibliography}{99}

\bibitem{Ah} {\sc N. Ahmed}, {\em Exponential discriminant regularization using nonnegative constraint and image descriptor},
IEEE 9th International Conference on Emerging Technologies, pp.1--6, 2013.


\bibitem{HJH} {\sc A. Al-Mohy and N.J. Higham}, {\it Compution the action of the matrix exponential, with an application to exponential integrators}, SIAM J. Sci. Comput., 33 (2011), pp. 488--511.




\bibitem{Benzi} {\sc M. Benzi, E. Estrada, and C. Klymko}, {\it Ranking hubs and authorities using matrix functions}, Linear Algebra Appl., 438 (2013), pp. 2447--2474.


\bibitem{Ber2} {\sc H. Berland, B. Skaflestad, and W. Wright}, {\it EXPINT--A matlab package for exponential integrators}, ACM Tran. Math. Soft., 33 (2007), Article 4.


\bibitem{Ber1} {\sc M. Berry and M. Browne}, {\it Understanding Search Engines: Mathematical Modeling and Text
Retrieval}, SIAM, Philadelphia, 1999.

\bibitem{Ber22} {\sc M. Berry, Z. Drma\u{c}, and E. Jessup}, {\it Matrices, vector spaces, and information retrieval},
SIAM Rev., 41 (1999), pp. 335--362.

\bibitem{Ber3} {\sc M. Berry, S. Dumais, and G. O'Brien}, {\it Using linear algebra for intelligent information
retrieval}, SIAM Rev., 37 (1995), pp. 573--595.

\bibitem{Stewart} {\sc M. Berry, S. Pulatova, and G.W. Stewart}, {\it Computing sparse reduced-rank approximations to sparse matrices}, ACM Tran. Math. Soft., 31 (2005), pp. 252--269.

\bibitem{Bey} {\sc G. Beylkin, J. Keiser, and L. Vozovoi}, {\it A new class of time discretization schemes for
the solution of nonlinear PDEs}, J. Comput. Phys., 147 (1998), pp. 362--387.






\bibitem{Testmatrix} {\sc T. Davis and Y. Hu}, {\it The University of Florida sparse matrix collection}, ACM Tran. Math. Soft, 38 (2011), Article 1.
{\it http://www.cise.ufl.edu/research/sparse/matrices/}.

\bibitem{DB} {\sc F. Dornaika, A. Bosaghzadeh}, {\em Exponential local discriminant embedding and its application to face recognition}, IEEE Transactions on Systems, Man, and Cybernetics-Part B: Cybernetics, 43 (2013), pp. 921--934.

\bibitem{DHS} {\sc R. Duda, P. Hart, and D. Stork}, {\em Pattern Classification}, 2nd ed. New York:
Wiley, 2000.



\bibitem{EH} {\sc E. Estrada and N. Hatano}, {\it Communicability in complex networks}, Physical Review E, 77: 036111, 2008.

\bibitem{EDH} {\sc E. Estrada and D.J. Higham}, {\it Network properties revealed through matrix functions}, SIAM Rev., 52 (2010), pp. 671--696.

\bibitem{EJA} {\sc E. Estrada and J. Rodr\'{\i}guez-Vel\'{a}zquez}, {\it Subgraph centrality in complex networks}, Physical Review E, 71: 056103, 2005.


\bibitem{GV} {\sc G.H. Golub and C.F. Van Loan}, {\it Matrix Computations}, Johns Hopkins University Press, Baltimore, MD, USA, Forth edition, 2013.



\bibitem{Random} {\sc N. Halko, P. Martinsson, and J. Tropp}, {\it Finding structure with randomness: probabilistic algorithms for constructing approximate matrix decompositions}, SIAM Rev., 53 (2011), pp. 217--288.

\bibitem{Han} {\sc P. Hansen}, {\it Rank-Deficient and Discrete Ill-Posed Problems}, SIAM, Philadelphia, 1998.

\bibitem{NJH} {\sc N.J. Higham}, {\it Accuracy and stability of numerical algorithms}, second edition, SIAM, Philadelphia, 2002.

\bibitem{Higham} {\sc N.J. Higham}, {\it Functions of Matrices: Theory and Computation}, SIAM, Philadelphia, 2008.

\bibitem{NJ} {\sc N.J. Higham}, {\it The scaling and squaring method for the matrix exponential revisited}, SIAM Matrix Anal. Appl., 51 (2009), pp. 747--764.

\bibitem{MCH} {\sc M. Hochbruck, C. Lubich, and H. Selhofer}, {\it Exponential integrators for large systems of differential equations}, SIAM J. Sci. Comput., 19 (1998), pp. 1552--1574.

\bibitem{Hochbruck-Ostermann-2010} {\sc M. Hochbruck and A. Ostermann}, {\it Exponential integrators}, Acta Numer., pp. 209--286, 2010.


\bibitem{Hof} {\sc B. Hofmannn}, {\it Regularization for Applied Inverse and Ill-posed Problems}, Teubner, Stuttgart, German, 1986.

\bibitem{Jiang} {\sc P. Jiang and M. Berry}, {\it Solving total least squares problems in information retrieval}, Linear Algebra Appl., 316 (2000), pp. 137--156.

\bibitem{Tre} {\sc A. Kassam and L.N. Trefethen}, {\it Fourth-order time-stepping for stiff PDEs}, SIAM J.
Sci. Comput., 26 (2005), pp. 1214--1233.



\bibitem{YY} {\sc Y. Lu}, {\it Computing a matrix function for exponential integrators}, J. Comput. Appl. Math., 161 (2003), pp. 203--216.

%
%
\bibitem{Mon} {\sc M. Mahoney}, {\it Randomized algorithms for matrices and data},
Foundations and Trends in Machine Learning, NOW Publishers, Volume 3, Issue 2, 2011.

\bibitem{ML} {\sc C. Moler and C.F. Van Loan}, {\em Nineteen dubious ways to compute the exponential of a matrix,
twenty-five years later}, SIAM Rev., 45 (2003), pp. 3--49.

\bibitem{NW2} {\sc J. Niesen and W. Wright},
{\em Algorithm 919: A Krylov subspace algorithm for evaluating the $\varphi$-functions appearing in exponential integrators},
ACM Trans. Math. Soft., 38 (2012), Article 22.

\bibitem{PP} {\sc C. Park and H. Park}, {\em A comparision of generalized linear discriminant analysis
algorithms}, Pattern Recognition, 41 (2008), pp. 1083--1097.

\bibitem{Rice} {\sc J. Rice}, {\em A theory of condition}, SIAM J. Numer. Anal., 3 (1966), pp. 287--310.

\bibitem{ST} {\sc T. Schmelzer and L.N. Trefethen}, {\em Evaluating matrix functions for exponential integrators via
Carath\'{e}odory-Fej\'{e}r approximation and contour integrals}, Electron. Trans. Numer. Anal., 29 (2007), pp. 1--18.

\bibitem{Sidje} {\sc R. Sidje}, {\it EXPOKIT: Software package for computing matrix exponentials}, ACM Tran. Math. Soft., 24 (1998), pp. 130--156.

\bibitem{WMWright} {\sc B. Skaflestad and W. Wright}, {\it The scaling and squaring method for matrix functions related to the exponential}, Appl. Numer. Math., 59 (2009), pp. 783--799.

\bibitem{GWStewart} {\sc G.W. Stewart}, {\it Four algorithms for the efficient computation of truncated pivoted qr approximations to a sparse matrix}, Numer. Math., 83 (1999), pp. 313--323.

\bibitem{ErrStewart} {\sc G.W. Stewart},
{\it Error analysis of the quasi--Gram--Schmidt algorithm}, SIAM J. Matrix Anal. Appl., 27 (2005), pp. 493--506.


\bibitem{Stuart} {\sc G. Stuart, M. Berry}, {\it A comprehensive whole genome bacterial phylogeny using
correlated peptide motifs defined in a high dimensional vector space}, J. Bioinformatics and
Computational Bio., 1 (2003), pp. 475--493.

\bibitem{Network}{\sc P. Tspras}, {\it Datasets for Experiments on Link Analysis Ranking Algorithms},
http://www.cs.toronto.edu/\~~tsap/experiments/datasets/index.html.

\bibitem{WCP} {\sc S. Wang, H. Chen, X. Peng, and C. Zhou}. {\em Exponential locality preserving projections for small sample size problem}, Neurocomputing, 74 (2011), pp. 36--54.

\bibitem{Wang} {\sc S. Wang, S. Yan, J. Yang, C. Zhou, and X. Fu}, {\em A general exponential framework for dimensionality reduction}, IEEE Tran. Image Process., 23 (2014), pp. 920--930.


\bibitem{Wu1} {\sc G. Wu, L. Zhang, and T. Xu}, {\it A framework of the harmonic Arnoldi method for evaluating $\varphi$-functions with applications to exponential integrators}, Adv. Comput. Math., 42(2016), pp. 505--541.


\bibitem{YPan} {\sc L. Yan and J. Pan}. {\em Two-dimensional exponential discriminant analysis and its application to face recognition}, International Conference on Computational Aspects of Social Networks (CASoN), pp. 528--531, 2010.

\bibitem{ZF} {T. Zhang, B. Fang, Y. Tang, Z. Shang,
and B. Xu}, {\em Generalized discriminant analysis: a matrix
exponential approach}. IEEE Transactions on Systems Man and
Cyberrnetics-part B: cyberrnetics, 40 (2010), pp. 186--197.

\bibitem{ZY} {\sc Z. Zhang, H. Zha, and H. Simon}, {\it Low-rank approximations with sparse factors I: Basic
algorithms and error analysis}, SIAM J. Matrix Anal. Appl., 23 (2002), pp. 706--727.







\bibitem{Toolbox} {\sc The Matrix Function Toolbox}. {\it http://www.mathworks.com/matlabcentral/fileexchange/20820-the-matrix-function-toolbox.}
%
%
%

\end{thebibliography}
\end{document}